\documentclass{article}
\usepackage[margin=1in]{geometry}
\usepackage{amsthm, amssymb}
\newtheorem{theorem}{Theorem}
\newtheorem{proposition}[theorem]{Proposition}%
\newtheorem{remark}{Remark}%
\newtheorem{lemma}{Lemma}
\theoremstyle{definition}%
\newtheorem{definition}{Definition}
\newtheorem{convention}{Convention}

\usepackage{subfig}
\usepackage{placeins}
\usepackage{natbib}
\usepackage{authblk}
\usepackage{tikz}
\usepackage{url}
\usetikzlibrary{arrows.meta}

\usepackage{mathtools}
\newcommand{\R}{\mathbb R}

\newcommand{\dgm}{\mathsf{dgm}}

\newcommand{\im}{\mathrm{Im}}

\begin{document}

\title{Through the Grapevine: Vineyard Distance as a Measure of Topological Dissimilarity}

\author[1]{Alvan Arulandu}
\affil[1]{Harvard University, Cambridge, MA}
\author[2]{Daniel Gottschalk}
\affil[2]{University of Southern California, Los Angeles, CA}
\author[3]{Thomas Payne}
\affil[3]{University of North Carolina at Greensboro, Greensboro, NC}
\author[4]{Alexander Richardson}
\affil[4]{Bowdoin College, Brunswick ME}
\author[3]{Thomas Weighill}

\maketitle

\begin{abstract}
We introduce a new measure of distance between datasets, based on vineyards from topological data analysis, which we call the vineyard distance. Vineyard distance measures the extent of topological change along an interpolation from one dataset to another, either along a pre-computed trajectory or via a straight-line homotopy. We demonstrate through theoretical results and experiments that vineyard distance is less sensitive than $L^p$ distance (which considers every single data value), but more sensitive than Wasserstein distance between persistence diagrams (which accounts only for shape and not location). This allows vineyard distance to reveal distinctions that the other two distance measures cannot. In our paper, we establish theoretical results for vineyard distance including as upper and lower bounds. We then demonstrate the usefulness of vineyard distance on real-world data through applications to geospatial data and to neural network training dynamics.
\end{abstract}
\section{Introduction}

We are interested in the very general problem of quantifying the difference between two datasets $X$ and $Y$. For example, $X$ and $Y$ might correspond to two different samples taken at different times, or they could correspond to two covariates between which there could be some dependence. We are particularly interested in the case when $X$ and $Y$ can be represented as functions $f$ and $g$ on a fixed domain $\Omega$. Among the many possibilities for measuring the discrepancy between $X$ and $Y$, two useful choices are:

\begin{itemize}
    \item[(a)] computing an \textbf{$L^p$ distance}, i.e.~$||f-g||_p = \left(\int_\Omega |f-g|^p \right)^{1/p}$
    \item[(b)] computing a \textbf{persistence distance}, i.e.~quantifying the shape of $f$ and $g$ by persistence diagrams and computing the Wasserstein distance between them (see Section~\ref{sec:bg_ph}).
\end{itemize}

Method (a) ignores the topology of $\Omega$, and is a very discriminatory measure: indeed, $||f-g||_p = 0$ only when $f$ and $g$ are the same up to measure $0$. Method (b), on the other hand, takes the shapes of $f$ and $g$ into account, but is far from injective: if $f$ and $g$ are mirror images (via a symmetry of $\Omega$) then they will have persistence distance zero. 

Our central proposal in this paper is a novel distance measure -- the vineyard distance -- which we claim naturally bridges this gap. It is more discriminatory than persistence distance since it is not invariant under symmetries of $\Omega$. However, it is less sensitive than $L^p$ distance since it depends mostly on the relationship between certain key features of $f$ and $g$ such as local minima and maxima. As the name suggests, vineyard distance is defined using a vineyard~\cite{cohen2006vines}, a one-parameter interpolation of persistence diagrams, from the persistence diagram of $f$ to the persistence diagram of $g$. Our framework allows this vineyard to either be a part of the given data, or induced by a straight-line homotopy. As a natural consequence of the way it is defined, vineyard distance behaves like a hybrid between $L^p$ and persistence distance. Our numerical experiments and real-world examples will demonstrate, however, that it is a richer intermediate than a linear combination of $L^p$ distance and persistence distance.

To substantiate our claim that vineyard distance provides more discriminatory features than $L^p$ distance and persistence distance, we provide theoretical bounds (Theorem~\ref{thm:vd_lower_bound} and~\ref{thm:vd_upper_bound}) bounding the (unweighted) vineyard distance below by persistence distance, and above by a simplicial $L^p$ distance (via a result of Skraba-Turner~\cite{skraba2020wasserstein}). Numerical results on Gaussian mixtures (Section~\ref{sec:gaussians}) reinforce these results.

We demonstrate the usefulness of vineyard distance through two proof of concept applications: analyzing geospatial data and studying neural network training dynamics. For geospatial data, we adopt the framework of~\cite{kauba2024topological} for demographic data and show how vineyard distance highlights richer features than both $L^p$ distance and persistence distance. We study the training dynamics of neural networks by tracking the topology of the decision boundary and demonstrate how vineyard distance distinguishes between qualitatively different learning trajectories on a simple example. 

The structure of the paper is as follows. We begin with background on persistent homology in Section~\ref{sec:bg}. We then carefully establish a theoretical foundation in Section~\ref{sec:vvtheory}, proving the equivalence between two different definitions of vineyard distance. In Section~\ref{sec:bounds} we prove theoretical lower and upper bounds for various scenarios and versions of vineyard distance. Section~\ref{sec:numeric} contains experiments on synthetic data that demonstrate the theoretical results and support our hypothesis that vineyard distance is a useful hybrid between $L^p$ and persistence distance. Section~\ref{sec:app-geospatial} contains our application to geospatial data, while Section~\ref{sec:app-nn} contains experiments on neural networks.

\subsection{Related Work}

\subsubsection{Stable Representations and Distances for Persistent Homology}

Metrics on the space of persistence diagrams have been studied extensively in the literature. They have been used to show that small input perturbations in the input yield small differences between the original and perturbed diagrams, demonstrating stability in persistence diagrams. The first such result appeared in \cite{cohen-steiner_stability_2007} for the bottleneck distance, which was extended to $p$-Wasserstein distance in~\cite{skraba2020wasserstein}. Vectorizations of persistence diagrams can be used to impose (pseudo)metrics on the space of persistence diagrams; established methods for vectorization include persistence images \citep{adams2017persistence}, persistence landscapes \citep{bubenik2015statistical}, and persistence curves~\cite{chung2022persistence}. One can also apply kernels to persistence diagrams, such as the persistence scale-space kernel \citep{reininghaus2015stable}, the persistence-weighted Gaussian kernel \citep{kusano2016persistence}, and sliced-Wasserstein kernels \citep{carriere2017sliced}. All of these distance or kernel measures ignore the functions underlying the persistence diagrams which can be an advantage in many applications since it makes the distance invariant under certain transformations of the data. By contrast, our vineyard distance incorporates the underlying spaces and functions, tracking not only the change in topological features but their relative location.

\subsubsection{Vineyards and families of persistence diagrams}
The stability of persistence diagrams \citep{cohen2005stability} implies the existence of continuous single-parameter families of persistence diagrams known as vineyards. Given a homotopy between two functions (or any other continuous single-parameter family of functions on the same complex), following any point through the stack of persistence diagrams as this single-parameter continuously varies yields a vine. Originally applied to protein folding trajectories, \citep{cohen2006vines} gave an efficient algorithm for computing vines and vineyards without recomputing the persistence diagram from scratch at each time. Since this seminal work on vineyards, several generalizations of vineyards have emerged including vineyard modules as an algebraic analog of vineyards for persistence modules  \citep{turner2023representing} and persistence diagram bundles as a multi-dimensional generalization of vineyards that include multi-parameter persistence modules. Unlike vineyards, both vineyard modules and persistence diagram bundles are not necessarily the union or sum of ``vines," but contain more complex structure. Recent work has also yielded algorithms for computing vineyards from zigzag persistence \citep{dey2023computing} and using periodicity in vineyards to understand monodromy \citep{chambers2025braiding}. Our proposed vineyard distance is a natural statistic of a vineyard's complexity, which can enable the study of ensembles of vineyards rather than just individual vineyards.



\subsubsection{Fused distances}
As described in the introduction, vineyard distance can be thought of as a hybrid between an $L^p$ distance and a purely topological distance. In this sense, it is related to other distances which incorporate both global topology as well as information about the underlying data such as fused Gromov-Wasserstein distance~\cite{titouan2019optimal, vayer2020fused}, distances between decorated merge trees~\cite{curry2022decorated} and distances between enriched barcodes~\cite{cang2020persistent}. We note that vineyard distance inherits its hybrid nature as a natural consequence of the way it is defined, instead of as a linear combination of different losses. The numerical experiments in Section~\ref{sec:gaussians} make it clear that vineyard distance also behaves qualitatively differently than a simple linear combination of $L^p$ and topological distance.

\subsubsection{Geospatial data}
Topological data analysis has been employed to study spatial patterns in civic and health data. Prior work analyzes voting patterns \citep{feng2021persistent}, redistricting  \citep{Duchin2022}, and city-scale demographics via tract-level filtrations \citep{kauba2024topological}. Notably, Hickock et al.~\cite{Hickock2022} analyzed time-varying spatial anomalies in COVID-19 data via vineyards where each vine corresponds with an anomaly. In these applications where the underyling geospatial data is inherently temporal, vineyards can embed the changes of topological features over time with vineyard distance being the summary statistic. Our application in Section~\ref{sec:app-geospatial} shows, however, that even when no temporal structure exists, the vineyard distance of a straight-line homotopy between scalar functions can provide unique insights about the relationship between different datasets.

\subsubsection{Deep learning}

Persistent homology has been used to probe neural network training dynamics by tracking topology changes across layers \citep{naitzat2020topology}, complexity measures like neural persistence \citep{rieck2018neural}, and topology-aware model evaluation via topological parallax \citep{smith2023topological}. This literature focuses largely on understanding the state of the neural network after training is completed (or perhaps at a single point during training). We propose that vineyard distance can provide insights into qualitative aspects of neural network training through our proof of concept in Section~\ref{sec:app-nn}.

\section{Background}\label{sec:bg}

\subsection{Persistent homology}\label{sec:bg_ph}

Persistent homology analyzes topological properties (e.g.~connected components, holes, voids etc) within data at various scales or thresholds. Rather than work from a static picture, the fundamental input to persistent homology is an increasing sequence of spaces. A standard way to produce such a sequence is by starting with a simplicial complex and a function $f$ defined on the simplices.

By a \textbf{(finite) simplicial complex} $\mathcal{K}$, we mean a set of vertices $V(\mathcal{K})$ and a set of simplices $\Sigma(\mathcal{K}) \subseteq 2^{V(\mathcal{K})}$. While every simplicial complex has many geometric realizations, our analysis will depend only on the combinatorial structure of the pair $(V(\mathcal{K}), \Sigma(\mathcal{K}))$. Given two simplices $\sigma, \tau \in \Sigma(\mathcal{K})$, we say $\sigma$ is a \textbf{face} of $\tau$ and $\tau$ is a \textbf{coface} of $\sigma$ if $\sigma \subseteq \tau$. 

\begin{convention}
Let $\mathcal{K}$ be a simplicial complex. A function $f: \mathcal{K} \to \R$ will always mean a function from the simplices of $\mathcal{K}$ to $\R$ such that $f(\sigma) \leq f(\tau)$ whenever $\sigma$ is a face of $\tau$. Such functions are sometimes called \textbf{monotone} functions. The set of all (monotone) functions $\mathcal{K} \to \mathbb{R}$ will be denoted by $\mathbb{R}^\mathcal{K}$ and given the product topology.
\end{convention}

Note that the definition of a (monotone) function is very different to that of a continuous function on $\mathcal{K}$ with some topology. A function $f: \mathcal{K} \to \R$ allows us to define, for any $t \in \R$, the \textbf{sublevel set} $\mathcal{K}_t$ as the subcomplex of $\mathcal{K}$ consisting of all simplices $\sigma$ with $f(\sigma) \leq t$. Assuming that the points in the image of $f$ are given by $t_0 \leq t_1 \leq \cdots\leq t_{i}\leq t_{i+1}\leq \cdots $, we obtain a sequence of increasing simplicial complexes

$$\mathcal{K}_{t_0}\subseteq \mathcal{K}_{t_1}\subseteq\cdots\subseteq \mathcal{K}_{t_{i}}\subseteq \mathcal{K}_{t_{i+1}}\subseteq\cdots$$
which we call a  \textbf{filtered simplicial complex} or \textbf{filtration}.

The inclusion maps $\mathcal{K}_{t_i} \to \mathcal{K}_{t_{i+1}}$ give rise to maps on simplicial homology in each dimension $d$:
$$
H_d(\mathcal{K}_{t_0}) \to H_d(\mathcal{K}_{t_1}) \to \cdots\to H_d(\mathcal{K}_{t_i}) \to H_d(\mathcal{K}_{t_{i+1}}) \to \cdots 
$$
Here, the homology groups $H_d(\mathcal{K}_{t_i})$ are vector spaces (when using coefficients from a field, as is standard in practical applications) whose dimension corresponds to the number of topological features of various kinds. For example, $\dim H_0(\mathcal{K}_t)$ is the number of connected components in $\mathcal{K}_t$, while $\dim H_1(\mathcal{K}_t)$ is the number of holes. The maps between the homology groups encode the evolution of these features over the length of the sequence. Each feature is \textbf{born} (appears) and may \textbf{die} (disappear or is merged into older features). Slightly more formally, the Fundamental Theorem of Persistent Homology~\cite{zomorodian2004computing} guarantees the existence of a basis for each $H_d(\mathcal{K}_{t_i})$ such that each basis element has a well-defined first and last appearance in the sequence; see for example~\cite{otter2017roadmap} for more concrete details.

A standard summary of these births and deaths is given by a \textbf{persistence diagram}, which records each birth/death pair as a point $(b,d)$ in the plane. We denote the persistence diagram associated to the function $f$ in dimension $d$ by $\dgm_d(f)$. It will be convenient to have a formal definition of a \textbf{persistence diagram} that does not refer to a specific simplicial function that produces it.  

\begin{definition}
Let $\mathbb{R}^2_{x \leq y} = \{(x,y) \in \mathbb{R} \mid x \leq y\}$. A \textbf{persistence diagram} is a finite multiset $P$ of points in $\mathbb{R}^2_{x \leq y}$. We denote the set of all persistence diagrams by $\mathcal{D}$.
\end{definition}

We use the term ``multiset'' to include cases where points in a diagram have multiplicity greater than $1$ (a formal definition and notation can be found in Section 2.1 of ~\cite{pritchard2024coarse}, for example). We will refer to the subset $\{(x,x) \in \mathbb{R}^2_{x\leq y}\}$ as the \textbf{diagonal} and denote it by $\Delta$. Note that by our definition persistence diagrams can include points on the diagonal, but these will be thought of as negligible. 

\begin{remark}
    Various definitions of persistence diagram exist in the literature, designed to suit a particular application or make certain definitions or proofs more efficient. For example, some definitions include an infinite supply of points on the diagonal, i.e.~points of the form $(x,x)$. Other definitions allow values of infinity for death values, which correspond to features which are present at the end of the filtration. In practice, infinite values are often removed or converted to a suitable ceiling value. 
\end{remark}

Persistence diagrams contain relevant topological information for a given function $f$ and complex $\mathcal{K}$. By defining distance measures with respect to diagrams, we can build notions of topological distance for two given functions $f,g:\mathcal{K}\to \R$ on the same complex (or even on different complexes). The Wasserstein distance is the most prominent such example. By a \textbf{partial bijection} from a (multi)set $P$ to a (multi)set $Q$, we mean a bijection $\pi: A \to B$ where $A$ and $B$ are sub(multi)sets of $P$ and $Q$.

\begin{definition}
    Given two persistence diagrams $P$ and $Q$, and $1\leq p,q\leq \infty$, we define the $(p,q)$-Wasserstein distance $W_{p,q}(A,B)=\min_{\pi:A\to B}D(\pi)$ where $\pi$ ranges over partial bijections from $P$ to $Q$ and
    $$D(\pi)=\left(\sum_{\alpha\in A}d_p(\alpha,\pi(\alpha))^q+\sum_{\alpha\not\in A}d_p(\alpha,\Delta)^q+\sum_{\beta\notin B}d_p(\beta,\Delta)^q\right)^{1/q}$$
    with $d_p$ being the $L^p$ metric for $\R^2$. 
\end{definition}
Informally, the Wasserstein distance computes the cost of a best matching between two diagrams, except that we allow any number of points in either diagram to match to the diagonal. We commonly refer to $p$ and $q$ as the internal and external metric coefficient respectively, adopting the shorthand $W_{p}=W_{p,p}$. Notice that when $p=q=\infty$, we get
    $$W_{\infty}=\min_{\pi:A\to B} \max\left(\max_{\alpha\in A}d_{\infty}(\alpha,\pi(\alpha)), \max_{\alpha\not\in A}d_{\infty}(\alpha,\Delta), \max_{\beta\not\in B}d_{\infty}(\beta,\Delta)\right)$$
    which is equivalent to the \textbf{bottleneck distance}. 
    The following important result states that persistence diagrams are stable with respect to the input function $f$ when measured by bottleneck distance.

\begin{theorem}\label{them:cohensteiner}[Cohen-Steiner et al.~\cite{cohen-steiner_stability_2007}]
        Let $f$ and $g$ be functions on a finite simplicial complex $\mathcal{K}$ and let $\dgm_d(f)$ and $\dgm_d(g)$ be the associated sublevel set filtration persistence diagrams in some dimension $d$. Then,
        $$W_{\infty}(\dgm_d(f),\dgm_d(g))\leq \lVert f-g\rVert_{\infty}$$
    \end{theorem}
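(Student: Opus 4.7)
The natural plan is to exhibit an $\epsilon$-interleaving between the sublevel set filtrations of $f$ and $g$, where $\epsilon = \lVert f-g\rVert_\infty$, and then invoke the algebraic stability (isometry) theorem for persistence modules. Writing $\mathcal{K}^h_t = \{\sigma \in \Sigma(\mathcal{K}) : h(\sigma) \leq t\}$ for a monotone function $h$, the hypothesis $|f(\sigma) - g(\sigma)| \leq \epsilon$ for every simplex $\sigma$ immediately gives both $\mathcal{K}^f_t \subseteq \mathcal{K}^g_{t+\epsilon}$ and $\mathcal{K}^g_t \subseteq \mathcal{K}^f_{t+\epsilon}$ for all $t \in \mathbb{R}$: indeed, $f(\sigma)\leq t$ forces $g(\sigma)\leq f(\sigma)+\epsilon\leq t+\epsilon$. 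Applying $H_d$ and using functoriality, these inclusions produce a pair of families of linear maps between the two persistence modules whose every composition recovers the $2\epsilon$-shifted structure map of the source module, i.e., they constitute an $\epsilon$-interleaving. The isometry theorem then yields $W_\infty(\dgm_d(f), \dgm_d(g)) \leq \epsilon$.

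A more concrete alternative is a vineyard-style argument on the straight-line homotopy $f_s = (1-s)f + sg$ for $s \in [0,1]$. Convex combinations preserve monotonicity, so each $f_s$ is a valid monotone function. One identifies a finite set of critical values $0 = s_0 < s_1 < \cdots < s_N = 1$ at which two simplices swap in the sorted order induced by $f_s$. On each open subinterval $(s_i, s_{i+1})$ the combinatorial persistence pairing is fixed and the birth/death coordinates of each point in the diagram move linearly in $s$ at speed bounded by $\lVert f-g\rVert_\infty$, so $W_\infty(\dgm_d(f_{s_i^+}), \dgm_d(f_{s_{i+1}^-})) \leq (s_{i+1}-s_i)\lVert f-g\rVert_\infty$. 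Combined with continuity of the diagram at each critical value, telescoping across the intervals gives the stated bound.

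The main obstacle in the vineyard-based approach is the careful treatment of swap events at the critical $s_i$: when the two swapping simplices are paired with distinct partners, a vine crossing rearranges the persistence pairing, and one must verify that each point traces a continuous trajectory across the crossing so that no extra bottleneck cost accrues. The interleaving-based approach sidesteps this combinatorial bookkeeping entirely, at the cost of relying on the algebraic stability theorem as a black box; I would choose the interleaving route for a concise proof and defer the vineyard perspective to the paper's later sections, where its continuous-trajectory structure is precisely the object of study.
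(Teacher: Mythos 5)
The paper does not prove this statement; it is quoted as a known result of Cohen-Steiner, Edelsbrunner and Harer, so there is no in-paper proof to compare against. On its own merits, your first (interleaving) argument is correct and is the standard modern proof: the inclusion $\mathcal{K}^f_t \subseteq \mathcal{K}^g_{t+\epsilon}$ and its mirror follow immediately from $\lVert f-g\rVert_\infty \leq \epsilon$, functoriality of $H_d$ turns these into an $\epsilon$-interleaving of the two persistence modules, and the algebraic stability/isometry theorem gives the bottleneck bound. The only detail worth making explicit is that the isometry theorem requires the modules to be q-tame (or pointwise finite-dimensional), which holds here because $\mathcal{K}$ is finite. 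Your second, vineyard-style sketch is closer in spirit to the original combinatorial interpolation proof of Cohen-Steiner et al., and you correctly identify its delicate point (the bookkeeping at transposition events); as written it is a sketch rather than a complete argument, so your decision to rest the proof on the interleaving route is the right one.
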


\subsection{Vertex-based sublevel set filtrations} \label{sec:vertexbased}

We now turn our attention to some ways to construct a function $f: \mathcal{K} \to \R$ from real data. First, we consider the case when our data can be encoded as a simplicial complex $\mathcal{K}$ (e.g.~a graph or triangulation) and a function  $f: V(\mathcal{K}) \to \mathbb{R}$ from the vertices of $\mathcal{K}$ to $\mathbb{R}$ (e.g.~node weights). We can extend $f$ to all simplicies by defining
$$
f(\sigma) = \max \{f(v) \mid v \in \sigma \}
$$
to give a filtration and therefore a persistence diagram (this is sometimes called a \textbf{lower-star filtration}). Note that this definition amounts to forcing that a simplex from $\mathcal{K}$ appear in $\mathcal{K}_t$ if and only if all its vertices are present. Abusing notation, we write $\dgm_d(f)$ to mean the $d$-dimensional persistence diagram for this filtration.

We should mention two important variants of the construction described above. Firstly, we can always ``filter down on $f$'' by replacing $f$ by $-f$ (or $M - f$ for a suitable constant $M$). This could reveal new information about $f$; for example, local maxima may be detected instead of local minima. Secondly, the construction can easily be adapted from simplicial complexes to cubical complexes, which provide a natural setting for image analysis and are more computationally efficient in some cases~\cite{wagner2011efficient}.

\subsection{Vietoris-Rips complexes}

In some cases, our data consists of a finite point cloud $X \subseteq \mathbb{R}^n$ (or more generally, a finite metric space). In that case, we can define a filtration using the \textbf{Vietoris-Rips complex} construction. For $t > 0$, we define $\mathcal{K}_t$ to be the complex whose vertices are the points in $X$ and where a subset $\sigma \subseteq X$ spans a simplex if and only if the diameter of $\sigma$ is at most $t$. Since the set of all distances realized in $X$ is finite, this complex changes at finitely many $t$-values, giving rise to a filtration 

$$\mathcal{K}_{t_0}\subseteq \mathcal{K}_{t_1}\subseteq\cdots\subseteq \mathcal{K}_{t_{i}}\subseteq \mathcal{K}_{t_{i+1}}\subseteq\cdots$$
which as before produces a persistence diagram.

To align this construction with the general one given in Section~\ref{sec:bg_ph}, note that we can consider the maximal simplicial complex $\mathcal{K}$ whose vertex set is $X$ and define the function $f: \mathcal{K} \to \R$ by $f(\sigma) = \mathsf{diam} (\sigma)$ to obtain the filtration above. This perspective eventually leads to the following corollary of Proposition~\ref{them:cohensteiner} proved in~\cite{chazal2009gromov}.

\begin{proposition}\label{prop:pd_stability_vr}[Chazal et al. \cite{chazal2009gromov}]
        Let $X$ and $Y$ be finite point clouds (or finite metric spaces), and let $\dgm_d(f)$ and $\dgm_d(g)$ be the associated Vietoris-Rips filtration persistence diagrams in some dimension $d$. Then,
        $$W_{\infty}(\dgm_d(f),\dgm_d(g))\leq d_{GH}(X, Y)$$
         where $d_{GH}$ is the Gromov-Hausdorff distance.
    \end{proposition}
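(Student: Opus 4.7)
The plan is to reduce this to Theorem~\ref{them:cohensteiner} by realizing both Vietoris-Rips filtrations as sublevel-set filtrations of two close functions on a single common complex, with that complex built from a near-optimal correspondence for the Gromov-Hausdorff distance. Recall the correspondence characterization: up to a normalization factor, $d_{GH}(X,Y)$ equals $\inf_R \mathsf{dis}(R)$, where $\mathsf{dis}(R) = \sup_{(x,y),(x',y') \in R}|d_X(x,x') - d_Y(y,y')|$ and $R$ ranges over correspondences $R \subseteq X \times Y$ (subsets whose projections onto $X$ and $Y$ are both surjective). I would fix such an $R$ with $\mathsf{dis}(R)$ arbitrarily close to this infimum, and let $\mathcal{K}$ be the full simplicial complex on vertex set $R$.

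Next, define monotone functions $F, G \colon \mathcal{K} \to \R$ by $F(\sigma) = \max_{(x,y),(x',y') \in \sigma} d_X(x,x')$ and $G(\sigma) = \max_{(x,y),(x',y') \in \sigma} d_Y(y,y')$. The pairwise distortion bound immediately gives $\|F - G\|_\infty \leq \mathsf{dis}(R)$, so Theorem~\ref{them:cohensteiner} yields $W_\infty(\dgm_d(F), \dgm_d(G)) \leq \mathsf{dis}(R)$. It then remains to identify these two diagrams with the Vietoris-Rips diagrams of $X$ and $Y$, and this identification is the main step.

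For that step, I would use the projection $p_X \colon R \to X$. It is surjective (since $R$ is a correspondence) and extends to a simplicial map $\mathcal{K}_t^F \to \mathsf{VR}_t(X)$ at every filtration value $t$, because $F(\sigma) = \mathsf{diam}(p_X(\sigma))$. Picking any set-theoretic section $s \colon X \to R$ of $p_X$, one has $p_X \circ s = \mathsf{id}_X$ on the nose, while $s \circ p_X$ is contiguous to the identity on $\mathcal{K}_t^F$: since $F$ depends only on the $X$-coordinate, the union $\sigma \cup s(p_X(\sigma))$ projects to the same subset of $X$ as $\sigma$ and therefore has the same $F$-value. Because contiguous simplicial maps agree on homology, and because $p_X$ and $s$ are single simplicial maps independent of $t$ that strictly commute with the filtration inclusions, these levelwise homotopy equivalences fit together into an isomorphism of persistence modules, so $\dgm_d(F)$ equals the Vietoris-Rips diagram of $X$. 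The same argument handles $G$ and $Y$, and taking $\mathsf{dis}(R)$ down to its infimum completes the proof (the exact constant depends on the normalization conventions for $d_{GH}$ and for the Vietoris-Rips parameter).

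The main obstacle I anticipate is the bookkeeping needed to verify that the contiguity-based homotopy equivalences really induce an isomorphism of persistence modules compatible with all filtration inclusions simultaneously; fortunately this is essentially automatic here because $p_X$ and $s$ are each a single simplicial map defined independently of $t$, so the levelwise isomorphisms assemble correctly without any extra choices. An alternative route that avoids Theorem~\ref{them:cohensteiner} would extract maps $\phi \colon X \to Y$ and $\psi \colon Y \to X$ from $R$, check that $\psi \circ \phi$ is contiguous to the inclusion $\mathsf{VR}_t(X) \hookrightarrow \mathsf{VR}_{t + 2\mathsf{dis}(R)}(X)$, and then invoke algebraic stability of persistence modules under interleavings; I prefer the above route because it keeps everything inside the simplicial stability framework already established in the paper.
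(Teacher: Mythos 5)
The paper offers no proof of this proposition — it cites Chazal et al.\ and frames the result as a corollary of Theorem~\ref{them:cohensteiner} via the diameter-function perspective of Section~\ref{sec:vertexbased} — and your argument is a correct, complete realization of exactly that route: a correspondence $R$ yields two monotone diameter functions on the full complex over $R$ that are $\mathrm{dis}(R)$-close in sup norm, and your contiguity argument (with $p_X \circ s = \mathrm{id}$ and $s \circ p_X$ contiguous to the identity, both commuting with the filtration inclusions) correctly identifies their sublevel-set diagrams with the Vietoris--Rips diagrams of $X$ and $Y$. The only caveat is the one you already flag: with the paper's diameter-$\leq t$ Rips convention and the standard normalization $d_{GH}(X,Y) = \tfrac{1}{2}\inf_R \mathrm{dis}(R)$, this argument yields the bound $2\,d_{GH}(X,Y)$, which is the constant usually stated in the literature rather than the one printed in the proposition.
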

In particular, if the points of a point cloud $X$ are moved by a bounded amount, the Vietoris-Rips persistence diagram also changes only by a bounded amount (in bottleneck distance).

\section{Vineyards and vineyard distance}\label{sec:vvtheory}

\subsection{Vineyards}

Vineyards can be thought of as time-varying persistence diagrams. 

\begin{definition}
        Let $\mathcal{D}$ denote the set of all persistence diagrams endowed with the topology arising from bottleneck distance (or equivalently, any $W_{p,q}$ distance). A \textbf{vineyard} is a continuous map $\mathcal{V}:[0,1] \to \mathcal{D}$. 
\end{definition}

In most cases, vineyards arise from a time-varying function on a fixed simplicial complex. Given two functions $f,g:\mathcal{K}\to \R$ on the same complex, a \textbf{homotopy from $f$ to $g$} is a continuous map $h:I \to \R^{\mathcal{K}}$ where $I = [0,1]$ such that $h(0)=f$ and $h(1)=g$.

    \begin{definition}
        Given a simplicial complex $\mathcal{K}$ and a homotopy $h: I \to \R^{\mathcal{K}}$, we define the \textbf{dimension $d$ vineyard induced by $h$} to be the map $\mathcal{V}_h:I\to \mathcal{D}$ defined by $\mathcal{V}_h(t) = \dgm_d(h(t))$. 
    \end{definition}
    
Vineyards were introduced in \cite{cohen2006vines}, where an algorithm is given for computing the vineyard induced by a homotopy by dynamically updating the persistence diagram, resulting in a more efficient method than recomputing the persistence at each time step. A vineyard $\mathcal{V}$ arising from such an algorithm comes with extra data, namely a matching between the persistence diagrams $\mathcal{V}(s)$ and $\mathcal{V}(t)$ when $s$ and $t$ are sufficiently close. A general vineyard as defined above, however, need not come already equipped with such matchings, and may not have a unique choice for them either. A formal algebraic treatment of the extra data carried by vineyards induced by homotopies has been initiated in \cite{turner2023representing}, via the notion of vineyard modules. 

As $t$ varies, the points in the diagram $\mathcal{V}(t)$ trace out curves in the plane, which are commonly called vines. That every vineyard (perhaps with mild conditions) can be decomposed into finitely many vines is stated in various places in the literature without proof. In order to define our main object of interest in this paper, we will require a stronger condition, namely that these vines have lengths that can be measured, so we make this an assumption for the remainder of the paper.

\begin{convention}\label{con:vines}
    We assume that any vineyard $\mathcal{V}$ in this paper can be decomposed into finitely many rectifiable \textbf{vines} $(V_i: [0,1] \to \mathbb{R}^2)_{1 \leq i \leq m}$ such that for each $t$, $\mathcal{V}(t)$ is the multiset $\{V_i(t) \mid 1 \leq i \leq m\}$. Moreover, we assume that within this decomposition, each vine $V$ falls into one of four classes:
    \begin{itemize}
        \item $V^{**}$: vines which never intersect the diagonal,
        \item $V^{*\circ}$: vines which end on the diagonal, that is, such that $\im(V) \cap \Delta =\{x\}$, and $V^{-1}(x)$ is a closed interval containing $1$. The minimum of this closed interval will be called the \textbf{end time} of the vine. 
        \item $V^{\circ*}$: vines which start on the diagonal, that is, such that $\im(V) \cap \Delta =\{x\}$ and $V^{-1}(x)$ is a closed interval containing $0$. The maximum of this closed interval will be called the \textbf{start time} of the vine.
        \item $V^{\circ \circ}$: vines which start and end on the diagonal, that is, such that $\im(V) \cap \Delta =\{x,y\}$ and $V^{-1}(x) \cup V^{-1}(y)$ is a union of two closed intervals which contains $0$ and $1$. Start and end times are defined similarly to the above.
    \end{itemize}
\end{convention}

The main difference between Convention~\ref{con:vines} and the classification of vines in Section 4 of \cite{cohen2006vines} is that we include the start and/or end points of each vine in the last three categories as part of the vine. This will make the theoretical results easier to state and prove since each vine has domain $[0,1]$, but it is not an important restriction since any ``open'' vine $(a,b) \to \mathbb{R}^2_{x\leq y}$ can be extended by adding the end points.

The regularity conditions in Convention \ref{con:vines} are motivated by the need to have a finite sum of well-defined line integrals of vines in Definition~\ref{def:vineyard}. However, these conditions hold for all vineyards of practical interest that we are aware of. The authors of the original vineyard paper note~\cite[Section 4]{cohen2006vines} that in many applications vineyards are not continuous but are instead linear interpolations from a discrete set of observations. In such cases the restrictions in Convention~\ref{con:vines} are easily verified to hold. The other main case of interest for this paper are straight-line homotopies. As noted by the same authors, vineyards arising from smooth homotopies have a decomposition into piecewise smooth vines which fall into the classes listed in Convention~\ref{con:vines}. 

In general, there may be more than one vine decomposition of a vineyard since vines can collide. Note that we say that two vines $V_1$ and $V_2$ \textbf{collide at $t$} if $V_1(t) = V_2(t)$, to distinguish from the strictly more general phenomenon wherein the images of $V_1$ and $V_2$ cross. A collision is only possible when some diagram $\mathcal{V}(t)$ contains points of multiplicity greater than $1$. In practical applications this is usually assumed to be a rare occurrence.

\subsection{Vineyard distance}

We are now ready to introduce the main object of study in the present paper.

\begin{definition}\label{def:weighting}
By a \textbf{weighting function}, we mean a uniformly continuous function $w: \mathbb{R}_{x\leq y}^2 \to \mathbb{R}_{\geq 0}$ which is non-zero outside of the diagonal.
\end{definition}

    \begin{definition}\label{def:vineyard}
    Let $\mathcal{V}$ be a vineyard.
        Fix any decomposition of $\mathcal{V}$ into a set of vines which we abuse notation to denote by $\mathcal{V}$ as well. Let $w$ be a weighting function. The $w$-\textbf{weighted vineyard distance} $\mathbb{V}^{(w)}(\mathcal{V})$ of this vineyard is
        \begin{equation}\label{eq:distancebyvine}
           \mathbb{V}^{(w)}(\mathcal{V}) = \sum_{V \in \mathcal{V}} \int_V w ds
        \end{equation}
        where the $\ell^\infty$ metric on the plane is used to compute the line integrals.
        \end{definition}   

        We will soon see that the vineyard distance is independent of the decomposition into vines (Theorem~\ref{thm:distancebyW}). We can formulate the vineyard distance as the limit of sums of arc elements as follows:

    \begin{equation}\label{eq:arcelements}
        \mathbb{V}^{(w)} = \sum_{V\in \mathcal{V}} \lim_{n\to\infty}\sum_{i=1}^{n}\left\lVert V\left(\frac{i}{n}\right)-V\left(\frac{i-1}{n}\right)\right\rVert_{\infty} w\left(V\left(\frac{i-1}{n}\right)\right)
    \end{equation}
    where $||\cdot ||_\infty$ denotes the sup-norm $||\mathbf{v}||_\infty = \max_i \mathbf{v}_i$. The purpose of the weighting function $w$ is to down-weight long vines near the diagonal (typically corresponding to features created by small perturbations or noise) in certain applications. We briefly define two typical choices for this weighting function.

    \begin{definition}
        The \textbf{standard weighting function} is the function $w((x,y)) = \Delta((x,y))$ where $\Delta(p)$ denotes the Euclidean distance from $p$ to the diagonal. The \textbf{uniform weighting function} is $w(x,y) = 1$. When $w$ is the uniform weighting function, we will refer to $\mathbb{V}^{(w)}(\mathcal{V})$ as the \textbf{unweighted vineyard distance} and denote it by $\mathbb{V}(\mathcal{V})$.
    \end{definition}

\begin{remark}
Historically, a vineyard $\mathcal{V}:[0,1] \to \mathcal{D}$ is often drawn as a 3D plot where the $z$-axis denotes the domain (thought of as the time variable) and each horizontal slice contains a persistence diagram (see e.g.~\cite{Hickock2022}), omitting points on the diagonal. Our vineyard distance is computed from the projection of these 3D curves to the $x$-$y$ plane. A natural question is how vineyard distance compares to the length of the original 3D curves. Depending on how the length of the 3D curves is measured, there may be only a weak relationship. The most interesting metric on $\mathbb{R}^2_{x\leq y} \times [0,1]$ for the purposes of this comparison is the metric 
$$d((x,y,t), (x',y',t')) = \max(|x-x'|, |y-y'|) + |t-t'|$$
If the vines never touch the diagonal and no weighting is used then the length of the 3D curves with this metric is just the vinyeard distance plus the number of vines. However, if weighting is used, or time on the diagonal is not counted towards the length of 3D curves,  then the length of the 3D curves can depend on the parameterization of the vineyard. By contrast, the vineyard distance is independent of the parameterization of $\mathcal{V}$. In summary, while 3D length is another natural choice for defining vineyard distance, our choice leads to less parameter dependence especially with weighting.
\end{remark}

A common way to produce a vineyard, and hence compute a vineyard distance, between two functions on the same simplicial complex $\mathcal{K}$, is using a straight-line homotopy. Note that any reparameterization of the straight line homotopy also gives the same vineyard distance.

\begin{definition} \label{def:straight-line-homotopy}
    Given two functions $f$ and $g$ on a set $X$, the \textbf{straight-line homotopy} is defined by
\[
h_t(x) =  (1-t)\cdot f(x) + t\cdot g(x)
\] 
If $f$ and $g$ are functions on the same simplicial complex $\mathcal{K}$ or on its vertices, then the  \textbf{straight-line vineyard}, denote by $\mathcal{V}_d(f,g)$, is the vineyard $
\mathcal{V}(t) = \dgm_d(h_t)
$
and we denote the corresponding vineyard distance by $\mathbb{V}_d(f,g)$. 
\end{definition}


\subsection{Formulation in terms of weighted Wasserstein distance}

In this section, we show that the vineyard distance can be computed without a vine decomposition using infinitesimal Wasserstein distances, possibly adapted to include weights. We begin by introducing a weighted version of the Wasserstein distance between persistence diagrams. Note that for the uniform weighting, the weighted and unweighted versions coincide, that is, $W_{p,q}^{(w)} = W_{p,q}$. 

\begin{definition}
    Let $P$ and $Q$ be two persistence diagrams, and let $w$ be a weighting function. Then, the \textbf{$w$-weighted $(p,q)$-Wasserstein distance} is $W_{p,q}^{(w)}(P, Q)=\min_{\pi:A\to B}D^{(w)}(\pi)$ where
        $$
    D^{(w)}(\pi)=\left(\sum_{\alpha\in A} d_{p}^{(w)} (\alpha,\pi(\alpha))^q+
    \sum_{\alpha\notin A} d_{p}^{(w)}(\alpha,\Delta)^q+
    \sum_{\beta\notin B} d_{p}^{(w)}(\beta,\Delta)
    )^q\right)^{1/q}
$$
    and

    $$d_p^{(w)}(p_0,p_1) = d_p(p_0, p_1)\frac{w\left(p_0\right)+ w(p_1)}{2}
    $$
    We call any $\pi$ realizing the minimum above an \textbf{optimal matching} for $W_{p,q}^{(w)}(P, Q)$.
\end{definition}

\begin{lemma}\label{lem:closeW}
    Let $P$ and $Q$ be persistence diagrams with no points on the diagonal, and let $\hat{P}$ and $\hat{Q}$ denote the sets (that is, points without multiplicity) of points in $P$ and $Q$. Let $w$ be a weighting function. Let
    \begin{align*}
        \varepsilon_w & = \min \left(\frac{1}{2} \min_{p \in \hat{P} \cup \hat{Q}} w(p),\ 1 \right) \\ \varepsilon_p & = \min_{p, p' \in \hat{P},\ p\neq p'} d_\infty(p,p') \\
        \varepsilon_q & = \min_{q, q' \in \hat{Q},\ q\neq q'} d_\infty(q,q') \\
        \varepsilon_\Delta & = \min_{p \in \hat{P} \cup \hat{Q}}  d_\infty(p, \Delta) 
        \end{align*}
    Suppose that $W_{\infty}(P, Q) < \varepsilon = \frac{1}{2}\varepsilon_w \cdot \min(\varepsilon_p, \varepsilon_q, \varepsilon_\Delta)$. Then, for each point $p\in \hat{P}$, there is a unique point $q \in \hat{Q}$ such that $d(p,q) < \varepsilon$. Further, this unique choice of points gives rise to an optimal matching for both the weighted Wasserstein distance $W^{(w)}_{\infty, 1}$ and the bottleneck distance.     
\end{lemma}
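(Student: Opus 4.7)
The plan is to proceed in two stages: first, establish the existence and uniqueness of a canonical bijection $\sigma : \hat P \to \hat Q$ and lift it to a matching $\pi^*$ of the multisets $P$ and $Q$; second, verify that $\pi^*$ is optimal for both the bottleneck distance and the weighted $(\infty,1)$-Wasserstein distance.

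Before anything else, I would record two elementary consequences of the definition of $\varepsilon$: since $\varepsilon_w \leq 1$ by construction, we have $2\varepsilon \leq \min(\varepsilon_p,\varepsilon_q,\varepsilon_\Delta)$, and every off-diagonal point $p \in \hat P \cup \hat Q$ satisfies $w(p) \geq 2\varepsilon_w$. Fixing any bottleneck-optimal matching $\pi_0$, no point of $P$ can be sent to $\Delta$ because $d(p,\Delta) \geq \varepsilon_\Delta > \varepsilon$, so $\pi_0(p) \in \hat Q$ with $d(p,\pi_0(p)) < \varepsilon$. The triangle inequality then forces uniqueness: any second candidate $q' \in \hat Q$ at distance $< \varepsilon$ from $p$ would satisfy $d(\pi_0(p), q') < 2\varepsilon \leq \varepsilon_q$, forcing $q' = \pi_0(p)$. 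Set $\sigma(p)$ to be this unique point; the symmetric argument applied to $\hat Q$ shows $\sigma$ is a bijection, and running the same uniqueness argument copy-by-copy shows the multiplicity of $p$ in $P$ equals that of $\sigma(p)$ in $Q$. Let $\pi^*$ be any matching pairing each copy of $p$ with a copy of $\sigma(p)$.

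Stage 2 handles the two distances separately. Optimality for the bottleneck distance is essentially immediate: $\pi^*$ has cost $\max_p d(p,\sigma(p)) < \varepsilon$, and any matching of cost $< \varepsilon$ must agree with $\pi^*$ on the set level by the uniqueness argument, hence achieves the same cost. For $W^{(w)}_{\infty,1}$, I would run an exchange argument based on decomposing the symmetric difference $\pi^* \triangle \pi'$ of $\pi^*$ with an arbitrary competitor $\pi'$ into alternating cycles in the augmented bipartite graph (in which unmatched points are paired with diagonal shadows). The key observation is that on each such cycle $C$, summing the average weight $(w(u)+w(v))/2$ over the $\pi^*$-edges yields the same value as summing over the $\pi'$-edges, since each vertex of $C$ contributes $w(v)/2$ to exactly one edge of each type. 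Meanwhile, every $\pi^*$-edge on $C$ has $d < \varepsilon$ (either $d(p,\sigma(p)) < \varepsilon$ for real-to-real edges, or $d=0$ for diagonal-to-diagonal), while every $\pi'$-edge involving a real point has $d \geq \varepsilon$: the bound $d(p,q) \geq \varepsilon_q - \varepsilon \geq \varepsilon$ handles $q \in \hat Q \setminus \{\sigma(p)\}$, and $d(p,\Delta) \geq \varepsilon_\Delta \geq 2\varepsilon$ handles unmatched points. Combining the weight-sum equality with these per-edge bounds gives $\pi'$-cost $\geq \pi^*$-cost on each cycle, and summing yields $D^{(w)}(\pi') \geq D^{(w)}(\pi^*)$.

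The main obstacle I anticipate is the bookkeeping around diagonal-to-diagonal edges, which have zero distance but nonzero average weight and could in principle break the clean $d \geq \varepsilon$ lower bound on the $\pi'$ side. To prevent this, I would choose $\pi^*$'s diagonal-to-diagonal pairings to coincide with $\pi'$'s wherever both matchings pair diagonals to each other; after this alignment, any remaining diagonal-to-diagonal edges in the symmetric difference necessarily lie on the $\pi^*$ side, so the only zero-distance edges appear on $\pi^*$ and the inequality closes. The factor of $\tfrac12$ built into $\varepsilon_w$ is precisely what makes the off-diagonal bound $w \geq 2\varepsilon_w$ compensate for the diagonal weights, which could otherwise be arbitrarily large under a merely uniformly continuous weighting function.
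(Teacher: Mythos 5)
Your proposal is correct, and its first stage (using a bottleneck-optimal matching to show no off-diagonal point can be sent to $\Delta$, then invoking $2\varepsilon \leq \min(\varepsilon_p,\varepsilon_q)$ to get uniqueness of the nearby partner, hence the canonical bijection $\sigma$ and bottleneck optimality) is essentially identical to the paper's argument. Where you genuinely diverge is the $W^{(w)}_{\infty,1}$ claim: the paper disposes of it with the single sentence ``a similar argument shows the same,'' whereas you supply an explicit exchange argument via alternating cycles, using the observation that the total average weight $\sum (w(u)+w(v))/2$ is conserved between the $\pi^*$-edges and the $\pi'$-edges of each cycle, together with the dichotomy $d<\varepsilon$ on $\pi^*$-edges versus $d\geq\varepsilon$ on $\pi'$-edges. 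This extra work is not gratuitous: the direct analogue of the bottleneck argument compares a single expensive edge of a competitor against the \emph{total} cost of the canonical matching, which is a sum over all points and need not be smaller, so some localization to cycles (or an equivalent device) really is needed to justify the weighted claim. Your handling of diagonal-to-diagonal edges by aligning $\pi^*$'s shadow pairings with $\pi'$'s is the right fix, and the same alignment should also be applied to edges of $\pi'$ that pair a copy of $p$ with a \emph{different} copy of $\sigma(p)$ --- those edges have $d<\varepsilon$ and would otherwise violate your per-edge lower bound on the $\pi'$ side, though they cost the same as the corresponding $\pi^*$-edges and so are harmless once aligned away. With that small addition your argument is complete and, for the weighted statement, more rigorous than what the paper records.
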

\begin{proof}
First, note that if two points $q, q' \in \hat{Q}$ are within $\varepsilon$ of a point $p \in \hat{P}$, then $d_\infty(q,q') < \varepsilon_p$, so $q = q'$. Similarly, if $p, p' \in \hat{P}$ are within $\varepsilon$ of a point $q \in \hat{Q}$, then $p = p'$. Consider an optimal matching $\pi$ from $P$ to $Q$ for the bottleneck distance. First, note that this matching cannot match any off-diagonal points to the diagonal, since otherwise the cost would exceed $\varepsilon_\Delta$. Thus, $\pi$ is a surjection from $\hat{P}$ to $\hat{Q}$. Each point $p \in P$ must be matched to a point $q$ with $d(p,q) < \varepsilon$ or else the cost of $\pi$ will exceed $\varepsilon$. It follows that $\pi$ is also injective on $\hat{P}$. A similar argument shows the same for an optimal matching for $W^{(w)}_{\infty, 1}$.
\end{proof}

\begin{theorem}\label{thm:distancebyW}
    Let $\mathcal{V}: [0,1] \to \mathcal{D}$ be a vineyard, and let $w$ be a weighting function. Then
    \begin{equation}\label{eq:distancebyW}
    \mathbb{V}(\mathcal{V}) = \lim_{n\to\infty}\sum_{i=1}^{n}W_{\infty,1}^{(w)}\left(\mathcal{V}\left(\frac{i}{n}\right),\mathcal{V}\left(\frac{i-1}{n}\right)\right)
    \end{equation}
\end{theorem}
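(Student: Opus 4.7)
The plan is to fix a decomposition of $\mathcal{V}$ into vines $\{V_1, \ldots, V_m\}$ as in Convention~\ref{con:vines}, write $t_i = i/n$, and compare both sides of the claimed identity against a canonical Riemann-type sum built from the decomposition. For each subinterval, let $\pi_i^{\mathrm{can}}$ be the partial bijection $\mathcal{V}(t_{i-1}) \to \mathcal{V}(t_i)$ that pairs $V_j(t_{i-1})$ with $V_j(t_i)$ whenever both lie off the diagonal, and let $C_i^{\mathrm{can}}$ denote its cost. Since $\pi_i^{\mathrm{can}}$ is feasible, $W_{\infty,1}^{(w)}(\mathcal{V}(t_{i-1}),\mathcal{V}(t_i)) \leq C_i^{\mathrm{can}}$. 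Rearranging $\sum_i C_i^{\mathrm{can}}$ by vine, each vine contributes a trapezoid-style Riemann sum whose limit is $\int_{V_j} w\,ds$ by rectifiability of $V_j$ and uniform continuity of $w$; the gap between the trapezoidal average $\tfrac{1}{2}(w(V_j(t_{i-1}))+w(V_j(t_i)))$ appearing in $C_i^{\mathrm{can}}$ and the left-endpoint weight of equation~\eqref{eq:arcelements} is bounded by the modulus of continuity of $w$ times the total arc length of $V_j$, which tends to $0$. Hence $\lim_n \sum_i C_i^{\mathrm{can}} = \mathbb{V}^{(w)}(\mathcal{V})$, immediately giving the $\limsup$ bound.

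For the matching lower bound, classify each subinterval as \emph{good} if it contains neither a collision of two vines nor a transition of a vine onto or off of the diagonal, and \emph{bad} otherwise. By Convention~\ref{con:vines} there are only finitely many such special times, so the number of bad intervals is bounded uniformly in $n$. On good intervals, the separation constants $\varepsilon_p, \varepsilon_q, \varepsilon_\Delta, \varepsilon_w$ from Lemma~\ref{lem:closeW} remain uniformly bounded below, while continuity of $\mathcal{V}$ forces $W_\infty(\mathcal{V}(t_{i-1}),\mathcal{V}(t_i)) \to 0$ uniformly, so the lemma applies and forces $W_{\infty,1}^{(w)} = C_i^{\mathrm{can}}$ on every good interval once $n$ is large. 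This yields $\sum_i W_{\infty,1}^{(w)} \geq \sum_i C_i^{\mathrm{can}} - \sum_{i \text{ bad}} C_i^{\mathrm{can}}$, and since the arc length of a rectifiable curve is a continuous function of the parameter and there are only finitely many shrinking bad intervals, $\sum_{i \text{ bad}} C_i^{\mathrm{can}} \to 0$. Combined with the $\limsup$ estimate, this proves the desired equality.

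The main obstacle will be handling diagonal interactions cleanly. Lemma~\ref{lem:closeW} explicitly requires both diagrams to have no points on the diagonal, yet vines in classes $V^{*\circ}, V^{\circ*}, V^{\circ\circ}$ rest on the diagonal for nontrivial subintervals of $[0,1]$, so $\mathcal{V}(t)$ genuinely contains diagonal points for many values of $t$. The resolution is to observe that diagonal points contribute zero to any weighted Wasserstein cost (under the natural convention for $d_p^{(w)}(\cdot,\Delta)$) and can be trimmed from both diagrams before invoking the lemma; the optimal matching on the trimmed diagrams then coincides with $\pi_i^{\mathrm{can}}$ restricted to the off-diagonal portion. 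This trimming trick breaks down precisely on the bad intervals identified above, where vines actually cross the diagonal or collide, but those intervals have already been shown to contribute vanishingly in the limit, so no additional argument is needed there.
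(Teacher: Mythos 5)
Your overall strategy---bounding $W^{(w)}_{\infty,1}$ above by the cost of the canonical vine matching to get the $\limsup$ inequality, and invoking Lemma~\ref{lem:closeW} away from degenerate times to get the reverse inequality---is the same as the paper's. However, there are two genuine gaps in the lower-bound half. First, you assert that ``by Convention~\ref{con:vines} there are only finitely many such special times.'' The Convention controls how vines meet the \emph{diagonal} (a single image point, with preimage a closed interval containing $0$ or $1$), but it says nothing about how two vines meet \emph{each other}: two rectifiable vines can collide at infinitely many times (e.g.\ $V_1(t)=(0,1)$ and $V_2(t)=(0,\,1+t^2\sin(1/t))$, which is Lipschitz and coincides with $V_1$ at $t=1/(k\pi)$ for every $k$). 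So your good/bad dichotomy is not well founded for general rectifiable vines. The paper avoids this by first replacing each vine $V$ with its polygonal approximation through the sample points $V(i/n)$---this leaves the right-hand side of \eqref{eq:distancebyW} unchanged and changes the line integrals by $o(1)$---and only for piecewise linear vines is the coincidence set of any two vines a finite union of closed intervals and isolated points. You need this (or an equivalent) reduction.

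Second, even granting finitely many special times, your claim that the separation constants $\varepsilon_p,\varepsilon_q,\varepsilon_\Delta$ ``remain uniformly bounded below'' on good intervals is false: a good interval immediately adjacent to a collision time contains diagrams in which the two colliding points are arbitrarily close, so $\varepsilon_p$ degenerates there and the hypothesis $W_\infty<\varepsilon$ of Lemma~\ref{lem:closeW} need not hold for those intervals at any fixed $n$. The correct order of quantifiers is the paper's: first fix a closed set $A\subseteq(0,1)$, a finite union of intervals avoiding a neighborhood of all special times, on which the separation constants genuinely are bounded below and which captures all but $\delta$ of the vineyard distance; then choose $n$ large enough that Lemma~\ref{lem:closeW} applies on every subinterval contained in $A$; finally let $\delta\to0$. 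With those two repairs your argument becomes essentially the paper's proof. (Your treatment of diagonal points by trimming is fine and matches the paper's observation that on each interval of $A$ a vine either avoids $\Delta$ or is frozen at a single point of $\Delta$; your upper-bound paragraph is also sound.)
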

\begin{proof}
We first show that we can reduce to a simpler case when the vines are piecewise linear. For any $n > 0$, let $V^{(n)}$ be the polygonal path approximating each vine $V$ with vertices $(V(i/n))_{0\leq i \leq n}$. Since $V^{(n)}$ agrees with $V$ at these vertices, the right hand side of (\ref{eq:distancebyW}) does not change if $V$ is replaced by $V^{(n)}$. On the other hand, the line integral of $V^{(n)}$ converges to the line integral of $V$ as $n \to \infty$. It is therefore enough to prove (\ref{eq:distancebyW}) for vineyards with piecewise linear vines.

Suppose then that all vines have this form. For every pair of vines, the set of times $t$ for which they coincide is the union of a finite set of disjoint closed intervals and isolated points. Putting together all such isolated points, the endpoints of all such closed intervals, and the start and end times of all vines, we can assemble a finite set of points. We can always find a subset $A \subseteq (0,1)$ consisting of finitely many disjoint closed intervals $(C_i)_{1\leq i \leq m}$ which avoid this set and satisfy the property that $
\sum_{V \in \mathcal{V}} \sum_{i=1}^m \int_{V |_{C_i}} w ds
$ is arbitrarily close to the vineyard distance for $\mathcal{V}$. 

Note that on each of these closed intervals vines either completely coincide or completely avoid one another. Each vine also avoids the diagonal or remains at a single point on the diagonal throughout the interval. It follows that the values
\begin{align*}
\varepsilon_p & =  \min \{||V(t) - V'(t)||_\infty \mid V, V' \in \mathcal{V},\ t \in A,\ V(t) \neq V(t') \} \\
\varepsilon_\Delta & = \min \{ ||V(t)-\Delta||_\infty \mid V \in \mathcal{V},\ t\in A,\ V(t) \notin \Delta \} 
\end{align*}
exist and are positive, as is the value
$$
\varepsilon_w = \min \{ w(V(t)) \mid V \in \mathcal{V},\ t \in A,\ V(t) \notin \Delta \},
$$
since $w$ is nonzero outside the diagonal. 

We can now pick $n$ so large that if $|t - t'| \leq 1/n$, then 
\begin{align*}
     W_{\infty}(\mathcal{V}(t), \mathcal{V}(t')) & < \frac{1}{2} \varepsilon_w \min(\varepsilon_p, \varepsilon_\Delta), \\
     \forall_{V \in \mathcal{V}} ||\mathcal{V}(t)-\mathcal{V}(t')||_\infty & < \frac{1}{2} \varepsilon_w \min(\varepsilon_p, \varepsilon_\Delta),
\end{align*}
and $t$ and $t'$ must be in the same $C_i$. Consider two diagrams $\mathcal{V}(t)$ and $\mathcal{V}(t')$ where $|t - t'| = 1/n$. By Lemma \ref{lem:closeW}, the matching given by the vines, i.e.~matching $V(t)$ to $V(t')$ for each $V \in \mathcal{V}$, is an optimal matching for $W^{(w)}_{\infty,1}$. It follows that 
$$
W^{(w)}_{\infty,1}(\mathcal{V}(t), \mathcal{V}(t')) = \sum_{V \in \mathcal{V}} \left( ||V(t) - V(t')||_\infty \cdot \frac{w(V(t))+w(V(t'))}{2} \right)
$$
and thus
$$
\sum_{i\in J}W_{\infty,1}^{(w)}\left(V\left(\frac{i}{n}\right),V\left(\frac{i-1}{n}\right)\right) = \sum_{V\in \mathcal{V}} \sum_{i \in J} \left(\left\lVert V\left(\frac{i}{n}\right)-V\left(\frac{i-1}{n}\right)\right\rVert_{\infty}  \cdot \frac{w(V\left(\frac{i}{n}\right))+w(V\left(\frac{i-1}{n}\right))}{2} \right)
$$
where $J$ is the set of $i$ such that $\{i/n, (i-1)/n\} \subseteq A$. As $n \to \infty$, the size of $A$ grows, and these sums converge to produce equation (\ref{eq:distancebyW}) as required.
\end{proof}

\section{Theoretical bounds on vineyard distance}\label{sec:bounds}

\subsection{Bounds for unweighted vineyard distance}

We now establish some bounds on vineyard distance in terms of persistence distance and $L^p$ distance.

\begin{theorem}\label{thm:vd_lower_bound}
Let $\mathcal{V}$ be a vineyard, and denote $P = \mathcal{V}(0)$ and $Q = \mathcal{V}(1)$. Then the unweighted vineyard distance $\mathbb{V}(\mathcal{V})$ satisfies
$$
W_{\infty, 1}(P, Q) \leq  \mathbb{V}(\mathcal{V}) 
$$
\end{theorem}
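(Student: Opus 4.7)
The plan is to exhibit a specific partial bijection $\pi$ between $P$ and $Q$ whose cost (in the sense of $W_{\infty,1}$) is at most $\mathbb{V}(\mathcal{V})$; since $W_{\infty,1}(P,Q)$ is defined as the minimum such cost, this yields the bound. The partial bijection can be read off directly from any vine decomposition of $\mathcal{V}$ guaranteed by Convention~\ref{con:vines}.

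Concretely, fix such a decomposition. For each vine $V \in V^{**}$, match $V(0) \in P$ with $V(1) \in Q$ under $\pi$. The unmatched off-diagonal points of $P$ are then precisely the start points of vines in $V^{*\circ}$, each contributing $d_\infty(V(0), \Delta)$ to the Wasserstein cost; symmetrically for unmatched off-diagonal points of $Q$ and vines in $V^{\circ*}$. Vines in $V^{\circ\circ}$ have both endpoints on $\Delta$ and contribute nothing. Any points of $P$ or $Q$ that already lie on $\Delta$ can be left unmatched at zero cost.

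The desired inequality $D(\pi) \leq \mathbb{V}(\mathcal{V})$ then reduces to a per-vine bound. For $V \in V^{**}$, the $L^\infty$ arc length satisfies $\int_V ds \geq d_\infty(V(0), V(1))$, the standard fact that the length of a rectifiable curve dominates the distance between its endpoints in any norm. For $V \in V^{*\circ}$ with end time $t^*$, since $V(t^*) \in \Delta$, we have $\int_V ds \geq d_\infty(V(0), V(t^*)) \geq d_\infty(V(0), \Delta)$, and similarly for $V \in V^{\circ*}$. Summing over all vines yields $D(\pi) \leq \sum_V \int_V ds = \mathbb{V}(\mathcal{V})$, and since $W_{\infty,1}(P,Q) \leq D(\pi)$, the theorem follows.

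The main subtlety to handle is the bookkeeping around points of $P$ or $Q$ on the diagonal or with multiplicity, but this is controlled by the vine decomposition, which by assumption produces the multiset $\mathcal{V}(t)$ exactly at every $t$, so every off-diagonal point of $P$ (with multiplicity) is the start of a unique vine in $V^{**} \cup V^{*\circ}$. An alternative, slicker route would combine Theorem~\ref{thm:distancebyW} (specialized to the uniform weighting) with the triangle inequality for $W_{\infty,1}$ applied along the partition $\{i/n\}$, letting $n \to \infty$; this bypasses the vine-level argument but depends on separately establishing the triangle inequality for $W_{\infty,1}$, which is not recorded in the excerpt.
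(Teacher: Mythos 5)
Your proof is correct, but your primary argument is genuinely different from the paper's. The paper proves this in one line: by Theorem~\ref{thm:distancebyW} the unweighted vineyard distance equals $\lim_n \sum_i W_{\infty,1}(\mathcal{V}(i/n),\mathcal{V}((i-1)/n))$, and since $W_{\infty,1}$ is a metric the triangle inequality bounds each such sum below by $W_{\infty,1}(P,Q)$ --- exactly the ``slicker route'' you sketch at the end. Your main argument instead works directly from Definition~\ref{def:vineyard}: it exhibits an explicit candidate matching (pair $V(0)$ with $V(1)$ for vines in $V^{**}$, send start points of $V^{*\circ}$ vines and end points of $V^{\circ*}$ vines to the diagonal) and bounds its cost vine by vine using the elementary fact that $L^\infty$ arc length dominates the $d_\infty$ distance between a curve's endpoints, together with $d_\infty(V(0),\Delta) \leq d_\infty(V(0),V(t^*))$ when the vine reaches the diagonal. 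This is sound: Convention~\ref{con:vines} guarantees $P$ and $Q$ are exactly the multisets of vine endpoints at $t=0$ and $t=1$, so the bookkeeping you flag (multiplicities, points already on $\Delta$ costing zero) does go through. What your route buys is self-containedness --- it needs neither the fairly involved Theorem~\ref{thm:distancebyW} nor the (unproved in the paper) triangle inequality for $W_{\infty,1}$, and it identifies the concrete matching that witnesses the bound, which is arguably more illuminating. What the paper's route buys is brevity, given that Theorem~\ref{thm:distancebyW} has already been established for other purposes, and it extends more readily to comparisons between intermediate slices $\mathcal{V}(s)$ and $\mathcal{V}(t)$.
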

\begin{proof}
    Since $W_{\infty, 1}$ is a metric, this result follows from the triangle inequality and Theorem \ref{thm:distancebyW}.
\end{proof}

If the vineyard distance comes from a straight-line homotopy then we can derive an upper bound as well. Recall the following generalization of Theorem~\ref{them:cohensteiner}.

\begin{theorem}\label{thm:skrabaturner}[Theorem 4.6 in \cite{skraba2020wasserstein}]
    Let $f$ and $g$ be functions on a finite simplicial complex $\mathcal{K}$ and let $\dgm_d{f}$ and $\dgm_d{g}$ be the associated sublevel set filtration persistence diagrams in some dimension $d$. Then,
    \begin{align*}
        W_{p}(\dgm_d(f), \dgm_d(g))^p \leq \sum_{\dim(\sigma) \in \{d, d+1\} }|f(\sigma) - g(\sigma)|^p
    \end{align*}
\end{theorem}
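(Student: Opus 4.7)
My plan is to construct an explicit matching between $\dgm_d(f)$ and $\dgm_d(g)$ via the vineyard induced by the straight-line homotopy $h_t = (1-t)f + tg$, and then bound its cost by the right-hand side using convexity. First I would reduce to the generic case in which $h_t$ induces distinct simplex values at all times except at finitely many transposition times $0 < t_1 < \cdots < t_k < 1$, and at each such time exactly one pair of simplices is swapped in the filtration order. By continuity of $W_p$ in its inputs (which follows from bottleneck stability) this loses no generality, and it places us in the setting of the Cohen-Steiner--Edelsbrunner--Morozov vineyard algorithm from \cite{cohen2006vines}.

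Next I would track the vineyard vine-by-vine. Between transposition times, each vine is assigned a birth-death pair of simplices $(\sigma_b, \sigma_d)$, and its position is $(h_t(\sigma_b), h_t(\sigma_d))$, moving linearly in $\R^2$ with constant velocity. At each transposition the assignment of simplices to vines is rewired by the local vineyard update rule; crucially, this rewiring is done exactly at a time when the two affected vines coincide in the plane, so no discontinuity of vine position is introduced. Following each vine from $t=0$ to $t=1$ yields a partial bijection $\pi \colon \dgm_d(f) \to \dgm_d(g)$ in which unmatched points correspond to vines that start or end on the diagonal.

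The heart of the proof is bounding the cost of $\pi$ by $\sum_{\dim\sigma \in \{d,d+1\}} |f(\sigma)-g(\sigma)|^p$. The key observation is that at each time $t$, every simplex $\sigma$ of dimension $d$ or $d+1$ is assigned as a birth or death simplex to at most one vine coordinate. Consequently, if we sum the signed coordinate velocities that each simplex contributes to its currently-assigned vine coordinate, the total signed contribution of $\sigma$ across $[0,1]$ is exactly $g(\sigma) - f(\sigma)$, because the rewiring at transpositions is cost-free (the two affected vines coincide at the switch). This already yields the $p=1$ case by the triangle inequality on each coordinate axis. For general $p \geq 1$, I would apply convexity of $x \mapsto x^p$ to each vine endpoint's $p$-th power of displacement, decomposing it into contributions charged to individual simplices according to the assignment; summing then gives $W_p^p \leq \sum |f(\sigma) - g(\sigma)|^p$ over $\dim\sigma \in \{d, d+1\}$.

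The main obstacle I expect is the bookkeeping of the charging scheme: formalizing that each simplex is charged at most once per vine endpoint in a way compatible with the convexity estimate, despite the reassignments at every transposition. This is essentially the combinatorial core of the Skraba--Turner argument; once the bookkeeping is in place, the $p$-power bound reduces to a routine application of Jensen's inequality and the definition of $W_p$ as an infimum over matchings.
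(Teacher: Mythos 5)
This statement is not proved in the paper at all: it is quoted verbatim as Theorem~4.6 of Skraba--Turner \cite{skraba2020wasserstein} and used as a black box, so there is no in-paper proof to compare against. Judged against the original source, your sketch reconstructs essentially the right strategy --- reduce to a generic straight-line homotopy with finitely many single transpositions, follow the piecewise-linear vines to build an explicit partial bijection, charge each coordinate's displacement to the simplices assigned to it, and convert the $p=1$ bound to a $p$-th power bound via Jensen's inequality with the subinterval lengths as weights. Two of your intermediate claims are stated too strongly, though neither breaks the argument. First, at a transposition the two affected vines need not coincide as points of $\mathbb{R}^2$; only the coordinate carried by the swapped simplices coincides (since $h_t(\sigma)=h_t(\tau)$ there), which is all that continuity of each coordinate requires. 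Second, the total signed contribution of a simplex $\sigma$ is \emph{exactly} $g(\sigma)-f(\sigma)$ only if $\sigma$ is assigned to some dimension-$d$ coordinate for all of $[0,1]$; a $d$-simplex can switch between being positive in dimension $d$ and negative in dimension $d-1$ across transpositions, so what you actually need (and what is true) is that the total assignment time of each simplex over all coordinates of the dimension-$d$ vineyard is at most $1$, which is precisely what makes the weighted Jensen step close. The genuinely hard content --- the case analysis at a single transposition showing the pairing is rewired in the controlled way you describe, with each simplex charged at most once per instant --- is exactly the part you defer to ``bookkeeping,'' and it is the substance of the lemmas in \cite{skraba2020wasserstein}; so your proposal is a faithful outline of the known proof rather than a complete independent argument, and it is an alternative to nothing in this paper, which simply cites the result.
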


\begin{theorem}\label{thm:vd_upper_bound}
Let $f$ and $g$ be functions on the simplicial complex $\mathcal{K}$, and let $\mathcal{V}$ be the vineyard in dimension $d$ induced by the straight-line homotopy between $f$ and $g$. Then, the unweighted vineyard distance satisfies
$$
\mathbb{V}(\mathcal{V}) \leq  \sum_{\dim(\sigma) \in \{d, d+1\} }|f(\sigma) - g(\sigma)|
$$
\end{theorem}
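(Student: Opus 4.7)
The plan is to combine the Wasserstein reformulation of vineyard distance from Theorem~\ref{thm:distancebyW} with the Skraba-Turner stability bound (Theorem~\ref{thm:skrabaturner}) applied at every step of a uniform partition of $[0,1]$. Since the theorem concerns the unweighted vineyard distance, I would take $w\equiv 1$ in Theorem~\ref{thm:distancebyW}, so that $W^{(w)}_{\infty,1} = W_{\infty,1}$ and
$$
\mathbb{V}(\mathcal{V}) \;=\; \lim_{n\to\infty}\sum_{i=1}^n W_{\infty,1}\!\left(\mathcal{V}\!\left(\tfrac{i-1}{n}\right),\, \mathcal{V}\!\left(\tfrac{i}{n}\right)\right).
$$

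The next step is a metric comparison: on $\mathbb{R}^2$ we have $d_\infty(p,q)\le d_1(p,q)$ pointwise, so plugging an optimal $W_{1,1}$ matching into the cost functional for $W_{\infty,1}$ gives $W_{\infty,1}(P,Q)\le W_{1,1}(P,Q)$ for any two diagrams. Applying Theorem~\ref{thm:skrabaturner} with exponent $p=1$ to the consecutive monotone functions $h_{t_{i-1}}$ and $h_{t_i}$ (where $t_i=i/n$) then yields
$$
W_{\infty,1}\!\bigl(\mathcal{V}(t_{i-1}),\mathcal{V}(t_i)\bigr) \;\le\; W_{1,1}\!\bigl(\mathcal{V}(t_{i-1}),\mathcal{V}(t_i)\bigr) \;\le\; \sum_{\dim(\sigma)\in\{d,d+1\}} \bigl|h_{t_{i-1}}(\sigma)-h_{t_i}(\sigma)\bigr|.
$$
The defining feature of the straight-line homotopy now makes the sum collapse neatly: for each simplex $\sigma$,
$$
h_{t_i}(\sigma)-h_{t_{i-1}}(\sigma) \;=\; \tfrac{1}{n}\bigl(g(\sigma)-f(\sigma)\bigr),
$$
so the right-hand side of the previous inequality equals $\tfrac{1}{n}\sum_{\dim(\sigma)\in\{d,d+1\}}|f(\sigma)-g(\sigma)|$. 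Summing over $i=1,\dots,n$ produces a bound of $\sum_{\dim(\sigma)\in\{d,d+1\}}|f(\sigma)-g(\sigma)|$ that does not depend on $n$, and passing to the limit gives the advertised inequality.

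I do not anticipate a substantive obstacle; the whole proof is essentially bookkeeping on top of two black-box results. The two points that warrant a sentence of justification are: (i) the metric comparison $W_{\infty,1}\le W_{1,1}$, which should not be confused with the reverse inequality for $L^p$ norms of functions; and (ii) the observation that each interpolant $h_t=(1-t)f+tg$ is still a monotone function on $\mathcal{K}$ (convex combinations preserve the face inequality), so that $\dgm_d(h_t)$ is well-defined and Theorem~\ref{thm:skrabaturner} legitimately applies at each step. Once these are checked, the finite telescoping and the $n\to\infty$ limit are immediate.
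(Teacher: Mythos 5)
Your proposal is correct and follows essentially the same route as the paper's proof: the comparison $W_{\infty,1}\le W_{1,1}$, the Skraba--Turner bound applied to consecutive interpolants, and the observation that linearity of the straight-line homotopy makes each term in the telescoping sum equal to $\tfrac{1}{n}\sum_{\dim(\sigma)\in\{d,d+1\}}|f(\sigma)-g(\sigma)|$. Your additional remark (ii), that convex combinations of monotone functions remain monotone so Theorem~\ref{thm:skrabaturner} applies at each step, is a worthwhile detail the paper leaves implicit.
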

\begin{proof}
Note that any optimal matching for the $W_{1,1}$ distance gives a matching (not necessarily optimal) for $W_{\infty, 1}$ with lower cost since $d_\infty \leq d_1$. It follows that $W_{\infty,1} \leq W_{1, 1}$. Let $h$ be the straight-line homotopy betwen $f$ and $g$. For any $t, t' \in [0,1]$, we have that 
    $$
    W_{\infty, 1}(\mathcal{V}(t), \mathcal{V}(t')) \leq W_{1, 1}(\mathcal{V}(t), \mathcal{V}(t')) \leq \sum_{\dim(\sigma) \in \{d, d+1\} }|h(t)(\sigma) - h(t')(\sigma)| 
    $$
by Theorem~\ref{thm:skrabaturner}. From Theorem~\ref{thm:distancebyW}, we thus have that
$$
\mathbb{V}(\mathcal{V}) \leq  \lim_{n\to \infty} \sum_{i=1}^n \ \sum_{\dim(\sigma) \in \{d, d+1\} }\left\vert h\left(\frac{i}{n}\right)(\sigma) - h\left(\frac{i-1}{n}\right)(\sigma) \right\vert
$$
However, since $h$ is a straight-line homotopy, each term in the outer summation is identical. This yields $\sum_{\dim(\sigma) \in \{d, d+1\} }|f(\sigma) - g(\sigma)|$ on the right hand side, which gives the result.
\end{proof}

In summary, when using the straight-line homotopy, the vineyard distance is bounded below by the persistence distance between the endpoints and bounded above by an appropriately defined $L^1$ distance. We thus expect our distance to have more power to distinguish different functions than a classical Wasserstein distance between persistence diagrams, while also being less sensitive to certain kinds of deformations than a topology-blind $L^p$ distance.

\subsection{Lower bound for the standard weighting}

Theorems~\ref{thm:vd_lower_bound} and \ref{thm:vd_upper_bound} can be adapted to the weighted setting via multiplicative constant factors corresponding to $\min(w)$ and $\max(w)$ where appropriate. However, for the standard weighting this is not helpful as $w$ has a minimum of $0$ and no maximum. We thus are interested in finding bounds for the standard weighting. To do so requires us to understand optimal paths (as measured by line integrals) between points in $\mathbb{R}^2_{x \leq y}$.

\begin{definition}
    Let $w$ be a weighting function and let $p,q \in \mathbb{R}^2_{x\leq y}$. The \textbf{$w$-path distance} between $p$ and $q$ is
    $$
    d_{\text{path}}^{(w)}(p,q)  = \min_{\gamma \in \Gamma(p,q)} \int_\gamma w ds
    $$
    where $\Gamma(p,q)$ is the set of rectifiable curves from $p$ to $q$. Any $\gamma$ realizing this minimum is called a $w$-geodesic.
\end{definition}

For the standard weighting, we can derive a closed form for $w$-geodesics, leading to the following result. The proof requires iteratively reconfiguring polygonal paths, and is in the Appendix.

\begin{proposition}\label{prop:geo_std_len}
Let $w$ be the standard weighting and let $p_0 = (x_0, y_0)$ and $p_1 = (x_1, y_1)$. Assume without loss of generality that $w(p_0) \leq w(p_1)$. Then the $w$-path distance from $p_0$ to $p_1$ is
$$
\min \left( w(p_0)\cdot \frac{|y_1+x_1-y_0-x_0|}{2} + \frac{w(p_1)^2-w(p_0)^2}{2\sqrt{2}},\ \frac{w(p_0)^2 + w(p_1)^2}{2\sqrt{2}} \right)
$$

\end{proposition}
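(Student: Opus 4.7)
The plan is to change to rotated coordinates $(u,v) = ((x+y)/\sqrt{2}, (y-x)/\sqrt{2})$, in which the diagonal becomes $\{v=0\}$ and the standard weighting simplifies to $w(p) = v$. A direct computation of $x'$ and $y'$ in terms of $u'$ and $v'$ gives $\max(|x'|,|y'|) = (|u'|+|v'|)/\sqrt{2}$, so the $L^\infty$ arclength element in the original coordinates equals $1/\sqrt{2}$ times the $L^1$ arclength element in the rotated coordinates. The problem thus reduces to
\[
d_{\text{path}}^{(w)}(p_0,p_1) = \frac{1}{\sqrt{2}} \inf_\gamma \int_\gamma v\,(|du|+|dv|),
\]
where the infimum ranges over rectifiable curves in the upper half-plane joining $(u_0,v_0)$ to $(u_1,v_1)$, and by hypothesis $v_0 = w(p_0) \leq w(p_1) = v_1$.

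I would then split $\int_\gamma v(|du|+|dv|) = \int_\gamma v\,|du| + \int_\gamma v\,|dv|$ and bound each piece in terms of the minimum value $v_{\min} \in [0,v_0]$ attained by $v$ along $\gamma$. Decomposing $\gamma$ into monotone arcs in $v$ and telescoping gives $\int_\gamma v\,|dv| \geq (v_0^2 + v_1^2)/2 - v_{\min}^2$, with equality whenever the $v$-profile dips once to $v_{\min}$ and is otherwise monotone. Since $v \geq v_{\min}$ pointwise and the total variation of $u$ along $\gamma$ is at least $|u_1-u_0|$, one also obtains $\int_\gamma v\,|du| \geq v_{\min}|u_1-u_0|$. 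Summing yields the lower bound $g(v_{\min})$, where $g(s) = -s^2 + |u_1-u_0|\,s + (v_0^2+v_1^2)/2$. Because $g$ is strictly concave on $[0,v_0]$, its minimum on that interval is attained at one of the endpoints, so every admissible path satisfies $\int_\gamma v(|du|+|dv|) \geq \min(g(0),g(v_0))$.

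For the matching upper bound, I would exhibit two explicit axis-aligned corner paths. The first descends vertically from $(u_0,v_0)$ to $(u_0,0)$, traverses the diagonal to $(u_1,0)$ at zero cost, and ascends vertically to $(u_1,v_1)$, with weighted $L^1$-length $v_0^2/2 + v_1^2/2 = g(0)$. The second traverses horizontally at height $v_0$ from $(u_0,v_0)$ to $(u_1,v_0)$, then vertically to $(u_1,v_1)$, with weighted $L^1$-length $v_0|u_1-u_0| + (v_1^2-v_0^2)/2 = g(v_0)$. Hence the infimum equals $\min(g(0), g(v_0))$. Dividing by $\sqrt{2}$ and translating back via $v_i = w(p_i)$ and $|u_1-u_0| = |y_1+x_1-y_0-x_0|/\sqrt{2}$ rewrites $(1/\sqrt{2})g(v_0)$ as the first term and $(1/\sqrt{2})g(0)$ as the second term of the minimum in the statement.

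The main obstacle I anticipate is justifying the telescoping inequality $\int_\gamma v\,|dv| \geq (v_0^2+v_1^2)/2 - v_{\min}^2$ for an arbitrary rectifiable $\gamma$, since the $v$-coordinate need not be piecewise smooth. A clean route is to approximate $\gamma$ by its polygonal samplings at a sequence of finer dyadic partitions: the weighted $L^\infty$ line integrals converge by continuity of $v$ and $w$ together with rectifiability, and each polygonal approximant has a $v$-coordinate with finitely many monotone pieces, so the bound follows by direct telescoping on approximants and a limit.
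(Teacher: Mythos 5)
Your proof is correct, and it takes a genuinely different route from the paper's. The paper works directly in the original coordinates with the $L^\infty$ metric: it first shows (via an explicit computation) that any straight segment can be replaced by a diagonal-parallel piece followed by a diagonal-perpendicular piece without increasing the weighted length, then sorts the resulting NW/NE/SW/SE segments of a piecewise-linear path into a canonical two- or three-segment form, and finally optimizes the remaining one-parameter family using the concavity of the cost in the ``initial descent'' parameter $q$. Your rotation $(u,v)=((x+y)/\sqrt{2},(y-x)/\sqrt{2})$ collapses most of that case analysis: the identity $\max(|x'|,|y'|)=(|u'|+|v'|)/\sqrt{2}$ turns the weighted $L^\infty$ length into $\tfrac{1}{\sqrt 2}\int_\gamma v\,(|du|+|dv|)$, the two summands are bounded below by $v_{\min}|u_1-u_0|$ and by the total variation of $v^2/2$ (which is at least $(v_0^2+v_1^2)/2-v_{\min}^2$), and concavity of the resulting function of the single scalar $v_{\min}\in[0,v_0]$ pushes the minimum to the endpoints, exactly matching your two explicit corner paths. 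What your approach buys is a lower bound that applies uniformly to \emph{all} rectifiable paths through the single parameter $v_{\min}$, rather than a reduction to canonical forms via segment rearrangement (the step ``moving SE segments earlier cannot increase the integral'' that the paper asserts is precisely what your pointwise bound $v\geq v_{\min}$ replaces); what the paper's approach buys is a picture of what the geodesics actually look like. Your anticipated technical point about rectifiability is handled the same way in both proofs (polygonal approximation), and is unproblematic since your per-path lower bound $\min(g(0),g(v_0))$ does not depend on the approximant's own minimum. One small remark: the paper's proof calls its one-parameter cost ``convex'' when the quadratic coefficient is in fact $-q^2$; your identification of $g$ as strictly concave is the correct statement of the same optimization step.
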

\begin{proof}
    See Appendix \ref{sec:apdx-geo-sl}.
\end{proof}

Proposition~\ref{prop:geo_std_len} gives us a closed form for the minimum line integral for a vine between two points with the standard weighting function. This gives us a new cost function which can be incorporated into a new Wasserstein-type distance.

\begin{definition}
    Let $P$ and $Q$ be persistence diagrams. The \textbf{minimum vine cost (MVC) from $P$ to $Q$}, denote by $\mathrm{MVC}(P, Q)$, is
        $$
        \min_{\pi:A\to B}
    \left(\sum_{\alpha\in A} d_{\text{path}}^{(w)} (\alpha,\pi(\alpha))+
    \sum_{\alpha\notin A} d_{\text{path}}^{(w)}(\alpha,\Delta)+
    \sum_{\beta\notin B} d_{\text{path}}^{(w)}(\beta,\Delta)
    )\right)
$$ 
where $\pi$ ranges over all partial bijections from $P$ to $Q$ and $w$ is the standard weighting.
\end{definition}

Here, by $d_{\text{path}}^{(w)}(\alpha,\Delta)$ we mean the line integral of a path from $\alpha$ to the the closest point (by $d_\infty$ distance) on the diagonal. Since any path can travel along the diagonal with zero cost, this is the same as the minimum line integral for a path from $\alpha$ to any point on the diagonal. It follows that the minimum vine cost is the lowest possible $w$-weighted vineyard distance that any vineyard from $P$ to $Q$ can achieve. We thus have the following theorem.

\begin{theorem}
    Let $\mathcal{V}$ be a vineyard, and let $P = \mathcal{V}(0)$ and $Q = \mathcal{V}(1)$. Let $w$ be the standard weighting. Then, the $w$-weighted vineyard distance $\mathbb{V}^{(w)}(\mathcal{V})$ satisfies
$$
MVC(P, Q) \leq  \mathbb{V}^{(w)}(\mathcal{V}) 
$$
\end{theorem}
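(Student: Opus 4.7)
The plan is to use any fixed vine decomposition of $\mathcal{V}$ (as guaranteed by Convention~\ref{con:vines}) to construct a specific partial bijection $\pi$ between the multisets $P$ and $Q$, and then show that the cost $D^{(w)}(\pi)$ appearing in the definition of $\mathrm{MVC}(P,Q)$ is bounded above by $\mathbb{V}^{(w)}(\mathcal{V})$. Since $\mathrm{MVC}(P,Q)$ is the minimum of such costs over all partial bijections, this gives the theorem.

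Concretely, first split the vines into the four classes $V^{**}$, $V^{*\circ}$, $V^{\circ*}$, $V^{\circ\circ}$ from Convention~\ref{con:vines}. The $V^{**}$ vines yield a partial bijection $\pi: A \to B$ by sending $V(0) \in P$ to $V(1) \in Q$, the $V^{*\circ}$ vines correspond to the unmatched points of $P$, and the $V^{\circ*}$ vines correspond to the unmatched points of $Q$; the $V^{\circ\circ}$ vines correspond to neither. For each vine contributing to the partial bijection, its line integral $\int_V w\, ds$ is, by definition of $d_{\text{path}}^{(w)}$, at least the length of a $w$-geodesic between the relevant endpoints: $d_{\text{path}}^{(w)}(V(0), V(1))$ for $V \in V^{**}$, $d_{\text{path}}^{(w)}(V(0), \Delta)$ for $V \in V^{*\circ}$, and symmetrically $d_{\text{path}}^{(w)}(V(1), \Delta)$ for $V \in V^{\circ*}$. (For the diagonal bound I would use that $w$ vanishes on $\Delta$, so a $V^{*\circ}$ vine's path cost lower-bounds the path cost to the nearest diagonal point.) Vines of type $V^{\circ\circ}$ contribute a nonnegative amount to $\mathbb{V}^{(w)}(\mathcal{V})$ but nothing to $D^{(w)}(\pi)$, so they only help the inequality. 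Summing then yields $\mathbb{V}^{(w)}(\mathcal{V}) \geq D^{(w)}(\pi) \geq \mathrm{MVC}(P,Q)$.

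The only subtle point is the multiset bookkeeping: when a point of $P$ has multiplicity greater than one, several vines emanate from it at $t=0$, and $\pi$ must be interpreted at the level of multisets so that distinct vine-copies are distinguished. This is a formality rather than a substantive obstacle, so I do not anticipate any real difficulty beyond the careful setup of the induced matching.
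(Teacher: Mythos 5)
Your proposal is correct and is essentially the paper's own argument made explicit: the paper justifies this theorem only with the preceding remark that the minimum vine cost is the lowest $w$-weighted vineyard distance any vineyard from $P$ to $Q$ can achieve, which is exactly your construction of a partial bijection from the vine decomposition together with the observation that each vine's line integral dominates the corresponding $d_{\text{path}}^{(w)}$ term. (One small wording slip: a $V^{*\circ}$ vine's line integral \emph{upper}-bounds $d_{\text{path}}^{(w)}(V(0),\Delta)$, not the reverse, but your final inequality chain has the directions right.)
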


\section{Numerical experiments}\label{sec:numeric}

We present two experiments designed to illustrate the difference in behavior between vineyard distance and two comparators. 

\subsection{Mean and variance-shifted Gaussians}\label{sec:gaussians}

In this experiment, $f$ is a 2D Gaussian plotted as a $100\times 100$ image with a standard deviation of $5$ pixels, scaled to have maximum value $1$. The function $g$ is the same but with a different mean or a different standard deviation. For each $g$, we compute the $L^1$ distance, the $1$-Wasserstein distance, and the unweighted straight-line homotopy vineyard distance between $f$ and $g$, using $H_1$ persistent homology for the latter two. Our choice of $H_1$ is designed to detect the ``loop'' around the Gaussian. We divide the $L^1$ distance by the number of pixels to make it numerically comparable to the other distances. 

\begin{figure}
    \centering
    \subfloat{
    \begin{tikzpicture}
        \node at (0.25,0) {\includegraphics[width=0.35\textwidth]{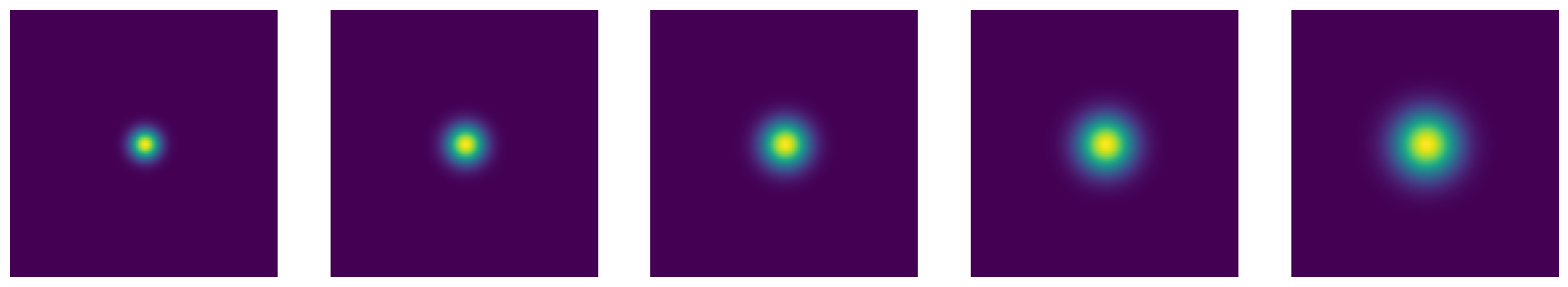}};
        \node at (0,-3) {\includegraphics[width=0.4\textwidth]{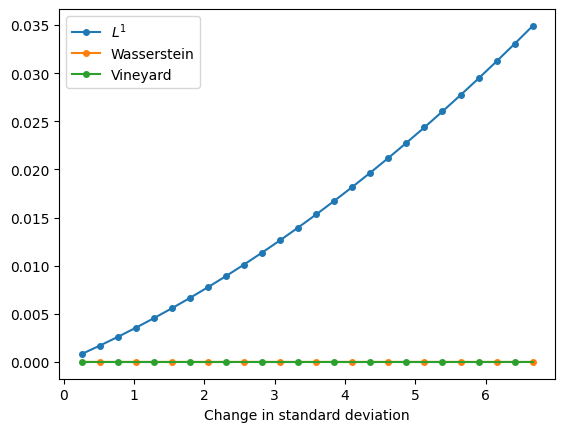}};
    \end{tikzpicture}   
    }%
    \subfloat{
    \begin{tikzpicture}
        \node at (0.25,0) {\includegraphics[width=0.35\textwidth]{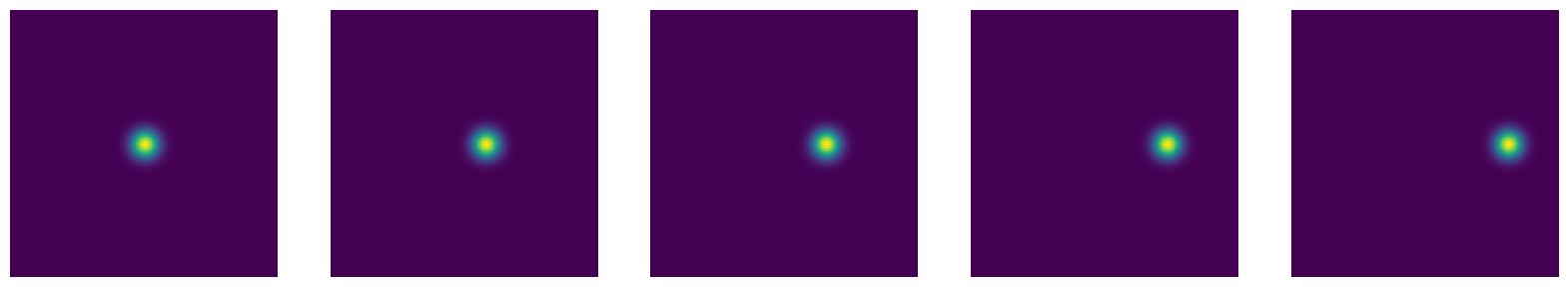}};
        \node at (0,-3) {\includegraphics[width=0.4\textwidth]{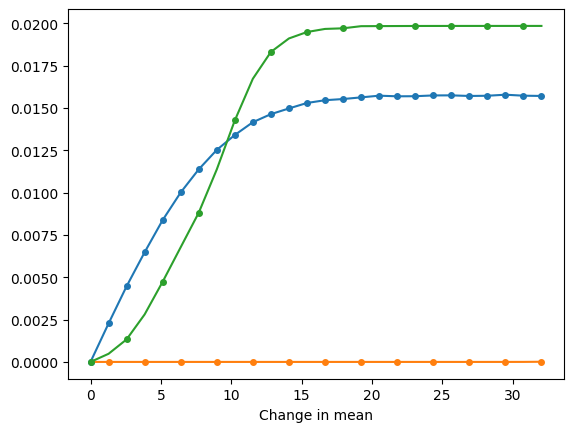}};
    \end{tikzpicture}   
    }%
    \caption{Comparing a Gaussian with a variance shifted version (left) or mean shifted version (right), using scaled $L^1$ distance, the persistence distance, and vineyard distance. Shifted functions are visualized above each plot.}
    \label{fig:gaussians}
\end{figure}

Figure~\ref{fig:gaussians} shows the results. We are mainly interested in the qualitative behavior of each distance. Wasserstein distance is approximately zero under both standard deviation and mean shifts since the birth of the loop (the min value) and the death of the loop (the max value) remain unchanged as long as the bulk of the function is away from the boundary. The $L^1$ distance increases with both kinds of shift as expected, since the individual pixel values change. Note that this particular $L^1$ distance is not the upper bound in Theorem~\ref{thm:vd_upper_bound} since it is divided by the number of pixels. Only vineyard distance shows different behaviors between the two kinds of shift. Shifting the mean changes the location of the loop, leading to the interpolating functions having interesting topology. Shifting the standard deviation, however, creates a family of interpolating functions which all have a loop with about same birth and death time. This simple experiment demonstrates that vineyard distance is qualitatively different from $L^1$ and Wasserstein distances in what kinds of changes it detects. Moreover, vineyard distance differs qualitatively from a fixed linear combination of $L^1$ and Wasserstein distance, since it increases in one case and remains at zero in another.

\subsection{Experiments on image data}\label{sec:app-images}

In this experiment, we use the MNIST dataset consisting of images of digits $0$--$9$. We take 100 images each from the set of $6$s, $7$s and $9$s, treating each as a function $f : \{1, \dots, m\} \times \{1, \dots, n\} \;\to\; [0,256]$, where $f(i,j)$ is the intensity of the pixel at column $i$, row $j$. For each pair of images, we compute the $L^1$ distance, the $1$-Wasserstein distance and the unweighted straight-line homotopy vineyard distance $\mathbb{V}(f,g)$. When computing the latter two distances, we use $H_1$ homology and we invert the image and ``filter down'', so holes appear first and are captured as persistent features. Using multi-dimensional scaling (MDS), we embed the data as points in the plane whose pairwise distances approximate those we computed. Figure~\ref{fig:digits} shows the results.

We see that $1$-Wasserstein distance is unable to distinguish between $6$s and $9$s since they are almost the same up to rotation. On the other hand, $L^1$ distance does not separate the $7$s and $9$s since they have bright pixels in similar areas (at the top of the digit), and the loop in the $9$ is not detected by $L^1$ distance. Vineyard distance separates the three classes much better, drawing a distinction between $6$s and $9$s based on the location of the hole, and $7$s based on the absence of a loop. As of yet, we are not arguing for vineyard distance as an efficient method for image classification since it is more computationally expensive than either of the comparators used here. However, this experiment demonstrates the qualitative differences between the three distances. 

\begin{figure}
    \centering
    \subfloat[$L^1$ distance]{
    \centering
        \includegraphics[height=3.5cm]{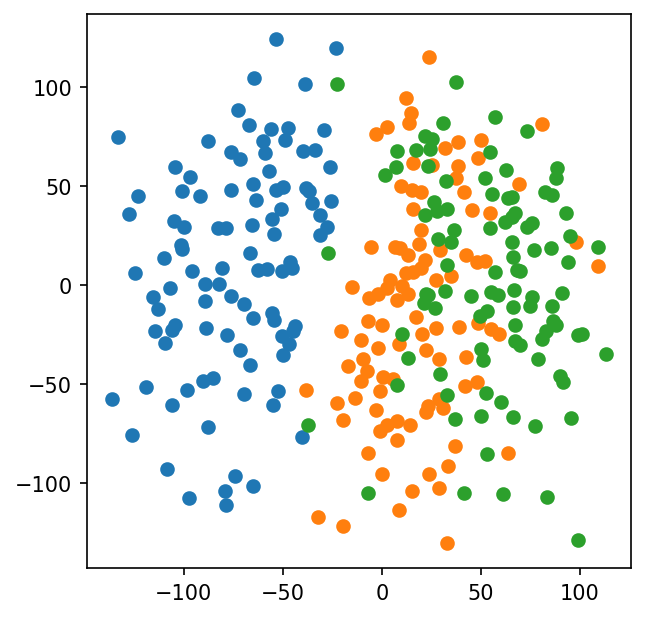}
    }
    \subfloat[$W_1$ distance]{
    \centering
        \includegraphics[height=3.5cm]{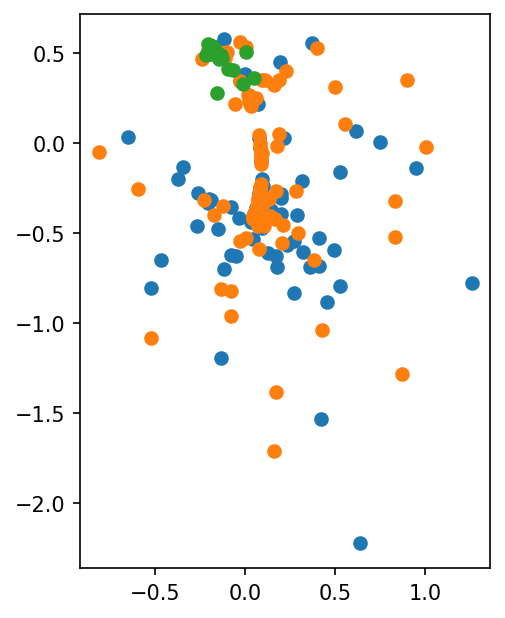}
    }
    \subfloat[Vineyard distance]{
    \centering
        \includegraphics[height=3.5cm]{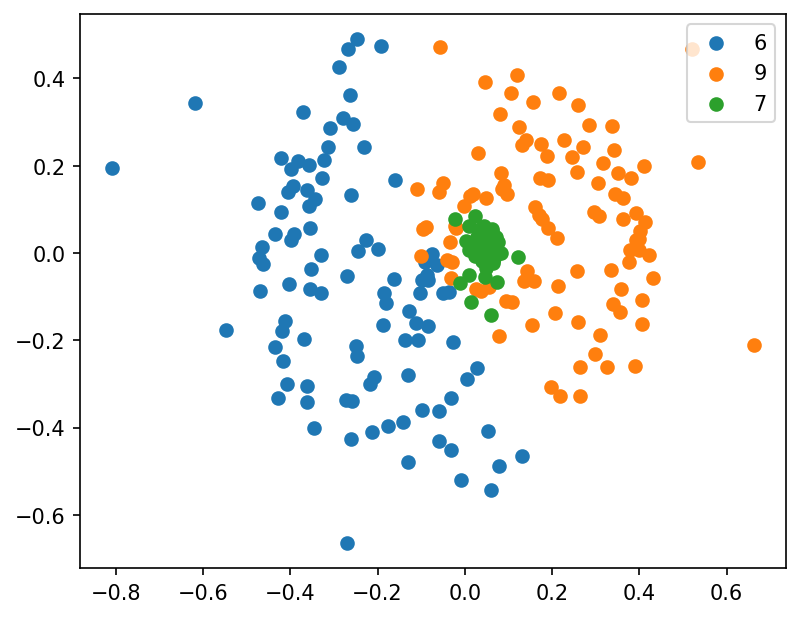}
    }
    \caption{Approximate embeddings using MDS and three different distances on a set of $100$ images of $6$s, $9$s and $7$s. }
    \label{fig:digits}
\end{figure}

\section{Application to geospatial data}\label{sec:app-geospatial}

In this section, we demonstrate how vineyard distance can bring a unique perspective when comparing geospatial datasets. A typical dataset of interest is a set of geographic units and a scalar value associated to each unit (e.g. counties and their Democratic vote share in a recent election). The data is simplified into a \textbf{dual graph} $G = (V_G, E_G)$, wherein each vertex represents a geographic unit, and edges connect vertices if units are adjacent. The scalar values then become a function on the vertices of the dual graph and thus can be used to compute a sublevel set filtration per Section~\ref{sec:vertexbased}. For the remainder of this section, we focus on $0$-dimensional persistent homology.

In what follows, we compute the standard weighting straight line homotopy vineyard distance $\mathbb{V}^{(w)}(\mathcal{V}(f,g))$ for various pairs of functions. Just as in our numerical experiments, we compute two comparators:
\begin{itemize}
    \item the \(L^1\) distance $||f-g||_{L^1} = \frac{1}{|V_G|}\sum_{v\in V_G}|f(v) - g(v))|$, and
    \item the $1$-Wasserstein distance on persistence diagrams $W_1(f,g)$ = $W_1(\dgm_0(f),\dgm_0(g))$.
\end{itemize}

We use real-world examples to tease out the similarities and differences between these three distances. Our overall conclusion is that for anti-correlated or uncorrelated datasets, such as the Black and Hispanic populations in Section~\ref{subsec:cities}, vineyard distance can reveal distinctions hidden by both $L^p$ and persistence distance. When datasets are highly correlated, such as Black population and Democratic voters in Section~\ref{subsec:counties}, vineyard distance acts as an effective combination of $L^p$ and persistence distance and is correlated with both.

\subsection{City demographics}\label{subsec:cities}

We begin with some examples from the U.S. cities dataset analyzed in~\cite{kauba2024topological}. This dataset consists of the $100$ largest U.S. cities with boundaries from the CDC's $500$ city project~\cite{CDC500CitiesBoundaries}, and demographic data obtained from the $2020$ Census~\cite{IPUMS2021}. For each city, we define functions $f$ and $g$ on the dual graph as follows: 
\begin{itemize}
    \item $f(v) = 1-R_B(v)$, where $R_B(v)$ is the fraction of tract $v$ who identified as Black.
    \item $g(v) = 1-R_H(v)$, where $R_H(v)$ is the fraction of tract $v$ who identified as Hispanic.
\end{itemize}%
Note that the U.S.~Census treats Black as a race category, and Hispanic as an ethnicity, so that these categories are not mutually exclusive. Nonetheless, we expect these distributions to be very disjoint in practice. Also note that we are ``filtering down'' so the persistence diagram will detect local maxima 

We first examine a case where the vineyard distance detects a distinction hidden by $1$-Wasserstein distance. As shown in Figure~\ref{fig:cities-Milwaukee_Lexington_4x4} and Table~\ref{tab:cities}, Milwaukee, WI and Lexington, KY have similar $W_1(f,g)$ but very different $||f-g||_{L^1}$. This is because the Black and Hispanic distributions, while similar in shape, are well-separated geographically in Milwaukee but not in Lexington. Table~\ref{tab:cities} shows that vineyard distance successfully detects the difference in this case. 

We also present a case where neither $1$-Wasserstein distance nor $L^1$ distance detects a distinction, but vineyard distance does. Returning to Table~\ref{tab:cities}, Phoenix, AZ and Dallas, TX have similar $W_1(f,g)$ and $||f-g||_{L^1}$, but their vineyard distances differ by a factor of almost $2$. We observe the reason for this in Figure~\ref{fig:cities-Phoenix_Dallas_4x4}. The shape and individual tract values of $f$ and $g$ are very different in both cities, creating a high $1$-Wasserstein and $L^1$ distance. However, the geographic overlap between $f$ and $g$ in Phoneix yields a smooth linear homotopy, resulting in a lower vineyard distance compared to Dallas. In contrast to the classical alternatives, only vineyard distance captures this persistent topological similarity.

\begin{table}
    \centering
    \begin{tabular}{c|c|c||c|c|}
        & Milwaukee, WI &  Lexington, KY & Phoenix, AZ & Dallas, TX\\
        \hline 
       $W_1(f,g)$  & 0.564 & 0.593 & 3.724 & 3.844\\
        \hline 
       $||f-g||_{L^1}$ & 0.389 & 0.075 & 0.315 & 0.271\\
        \hline 
       $\mathbb{V}^{(w)}(\mathcal{V}(f,g))$ & 0.006 & 0.001 & 0.011 & 0.021\\
        \hline 
       \end{tabular}
    \caption{Comparison of distances between Black population percentage ($1-f$) and Hispanic population percentage ($1-g$) data for two pairs of cities.}
    \label{tab:cities}
\end{table}

\begin{figure}
    \centering
    \includegraphics[width= .65 \textwidth]{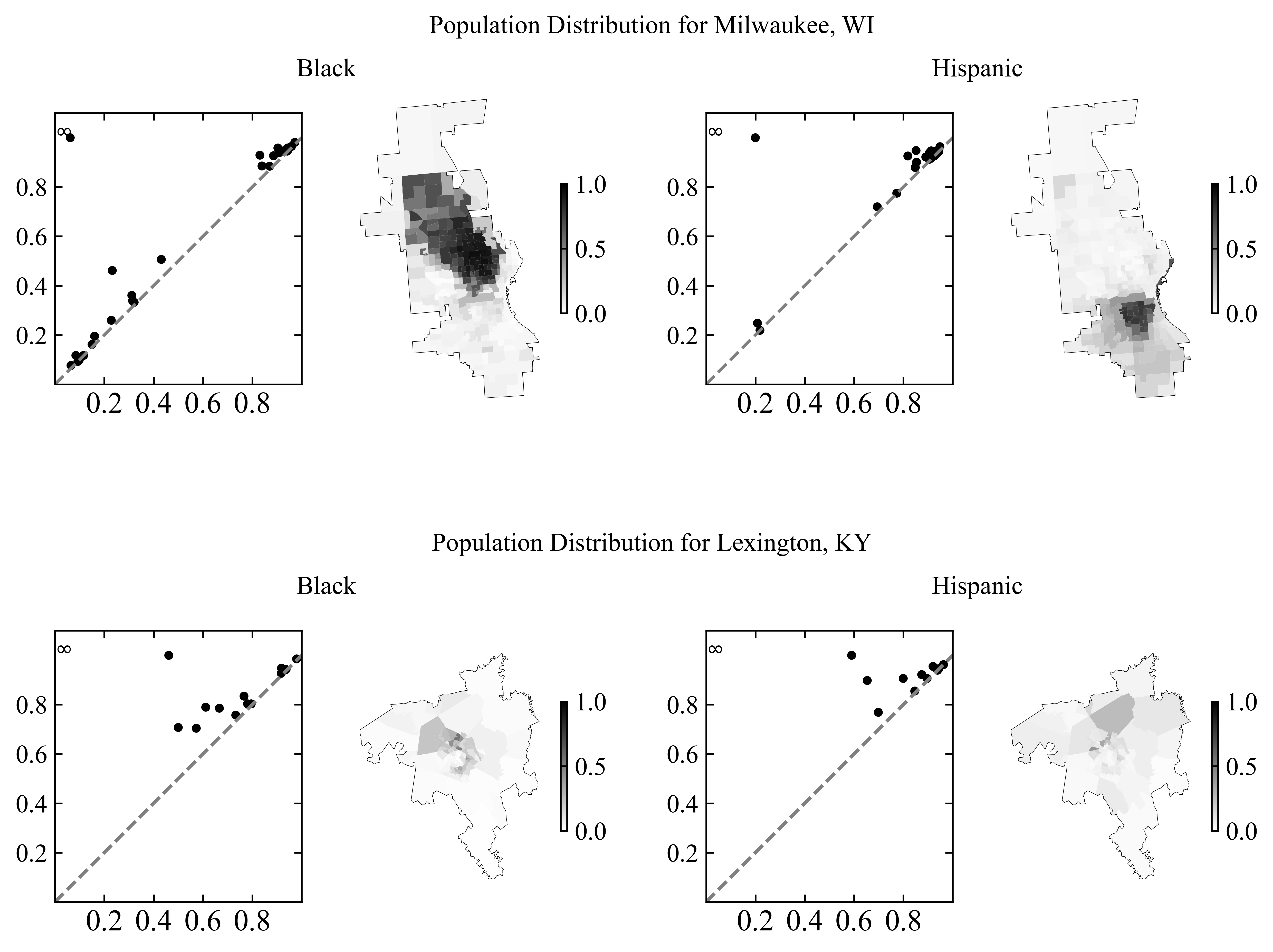}
    \caption{We show the persistence diagrams for $f$ and $g$ as defined in Section~\ref{subsec:cities} beside the data that these functions depend on (Black and Hispanic population percentages), shown on the map of each city. We show two cities: Milwaukee (above) and Lexington (below). }
    \label{fig:cities-Milwaukee_Lexington_4x4}
\end{figure}

\begin{figure}
    \centering
    \includegraphics[width=0.65\textwidth]{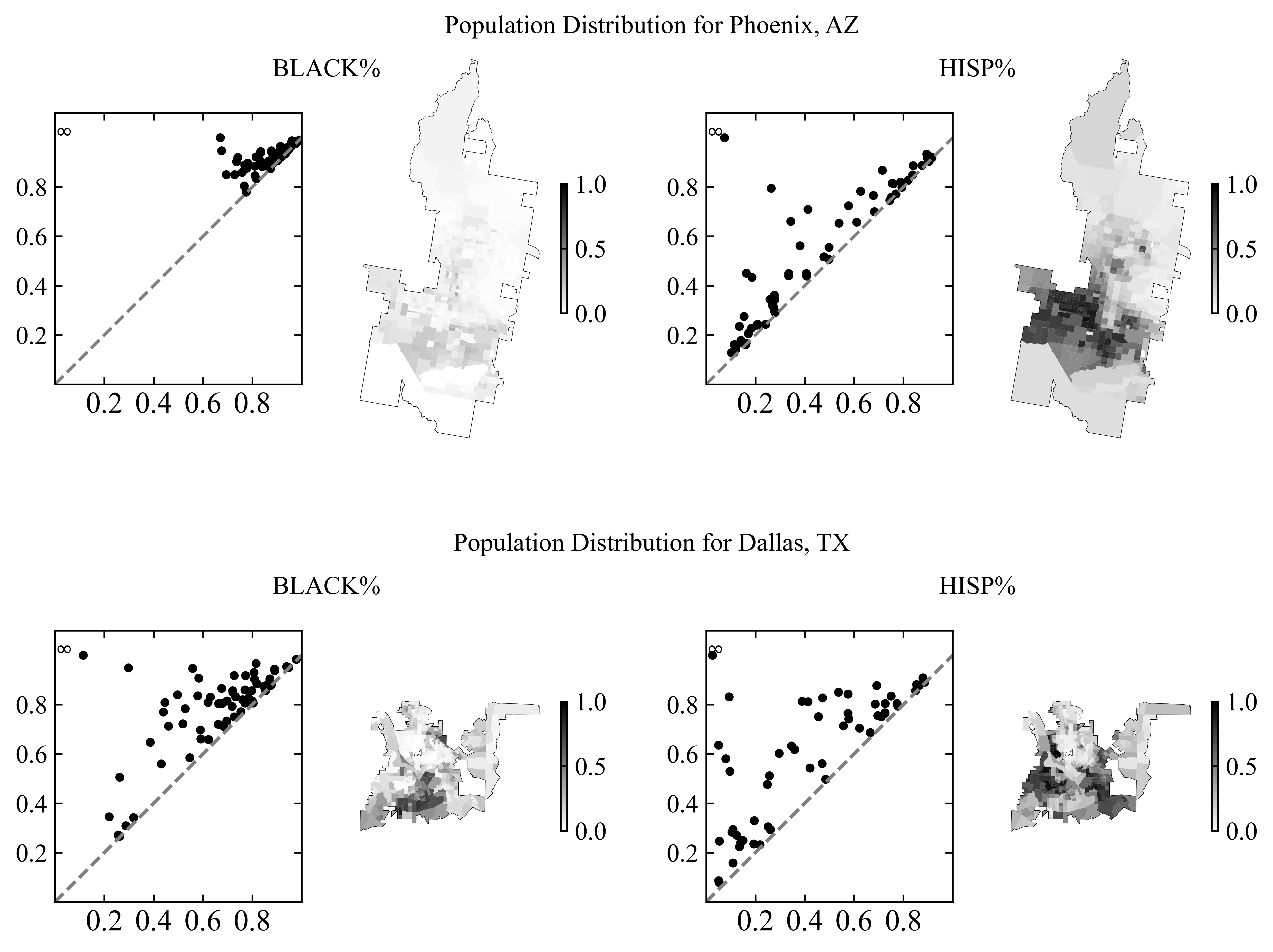}
    \caption{We show the persistence diagrams for $f$ and $g$ as defined in Section~\ref{subsec:cities} beside the data that these functions depend on (Black and Hispanic population percentages), shown on the map of each city. We show two cities here: Phoenix (above) and Dallas (below). }
    \label{fig:cities-Phoenix_Dallas_4x4}
\end{figure}

\subsection{Voting data}\label{subsec:counties}

In this section, we perform a complementary analysis using U.S.~election data instead of just demorgraphic data. We use county vote totals for six Presidential elections ($2000$--$2020$) from the MIT Election Data + Science Lab~\cite{HarvardElections2018}. For each dual graph, we define our functions $f$ and $g$ as follows:

\begin{itemize}
    \item $f(v) = 1 - R_B(v)$, where $R_B(v)$ is the fraction of county $v$ who identify as Black.
    \item $g(v) = 1 - R_D(v)$, where $R_D(v)$ is the fraction of county $v$ who voted Democrat.
\end{itemize}

This choice of functions is motivated by the consistent (but nuanced and well-studied) support of Black voters for the Democratic party in recent decades~\cite{haynie2010blacks}. We therefore expect to see strong correlation between these functions, modulated by whether there are other kinds of Democratic voters available, creating an interesting case study for measuring similarity between two functions.

In Figure~\ref{fig:election-scatter}, we observe a strong linear correlation between $\mathbb{V}^{(w)}(\mathcal{V}(f,g))$ and $W_1(f,g)$. This demonstrates that vineyard distance encodes similar topological information to $1$-Wasserstein distance in the high correlation setting. By shading each point in Figure~\ref{fig:election-scatter} by its $L^1$ distance, we see that within a fixed $1$-Wasserstein band, vineyard distance increases with $L^1$ distance, demonstrating that vineyard distance acts like a combination of $L^1$ and $1$-Wasserstein distance as expected.

It is instructive to look at the greatest vineyard distance in our dataset, namely Virginia's $2020$ election results shown in Figure~\ref{fig:election-VA_2020}. The areas of Democratic support outside of black voter hubs lead to different shapes and geographic locations for the $f$ and $g$, which create a high vineyard distance. Looking at the right hand plot of Figure~\ref{fig:election-scatter}, we see that broad Democratic support tends elevates vineyard distance, precisely because non-Black Democratic voter groups also affect the analysis.

\begin{figure}
\centering
    \centering
    \includegraphics[width=\textwidth]{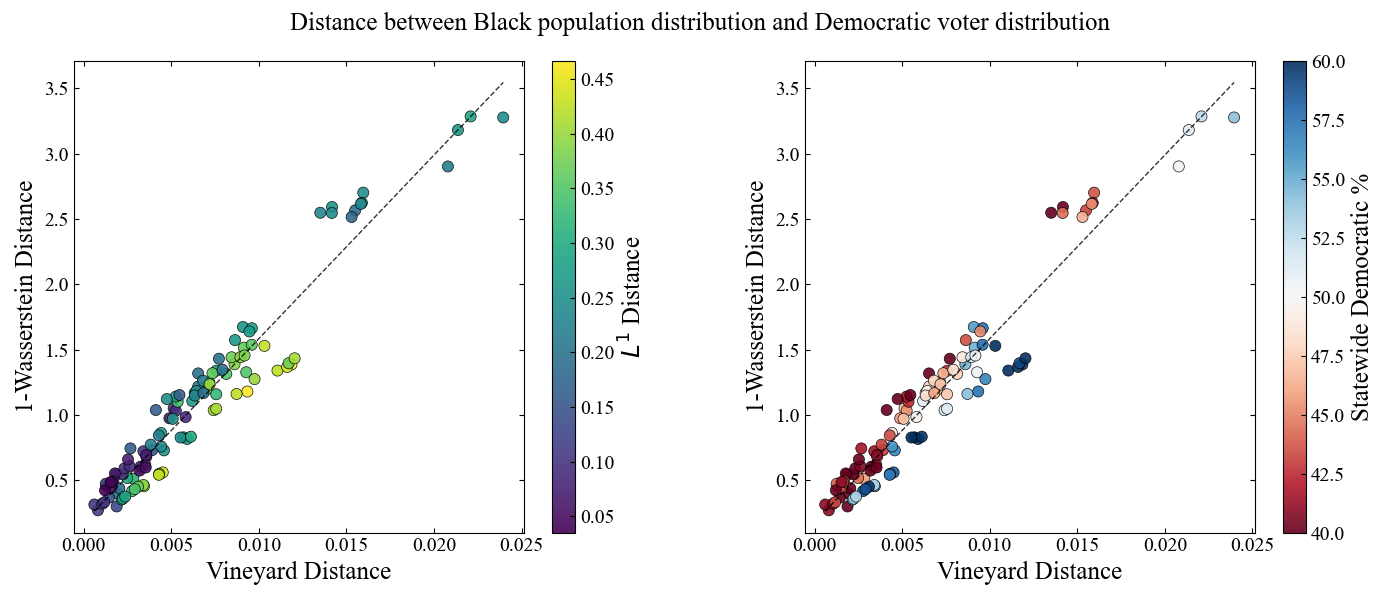} 
\caption{Comparing the Black population distribution with the distribution of Democratic voters across 50 U.S. States and six elections. We plot the Vineyard Distance against the $1$-Wasserstein distance in both plots. On the left, we color the points by a third distance, the $L^1$ distance, and on the right we color the points by statewide share, each of which seems to affect some of the deviation from the line $y=x$.}\label{fig:election-scatter}
\end{figure}

\begin{figure}
    \centering
    \includegraphics[width=0.6\textwidth]{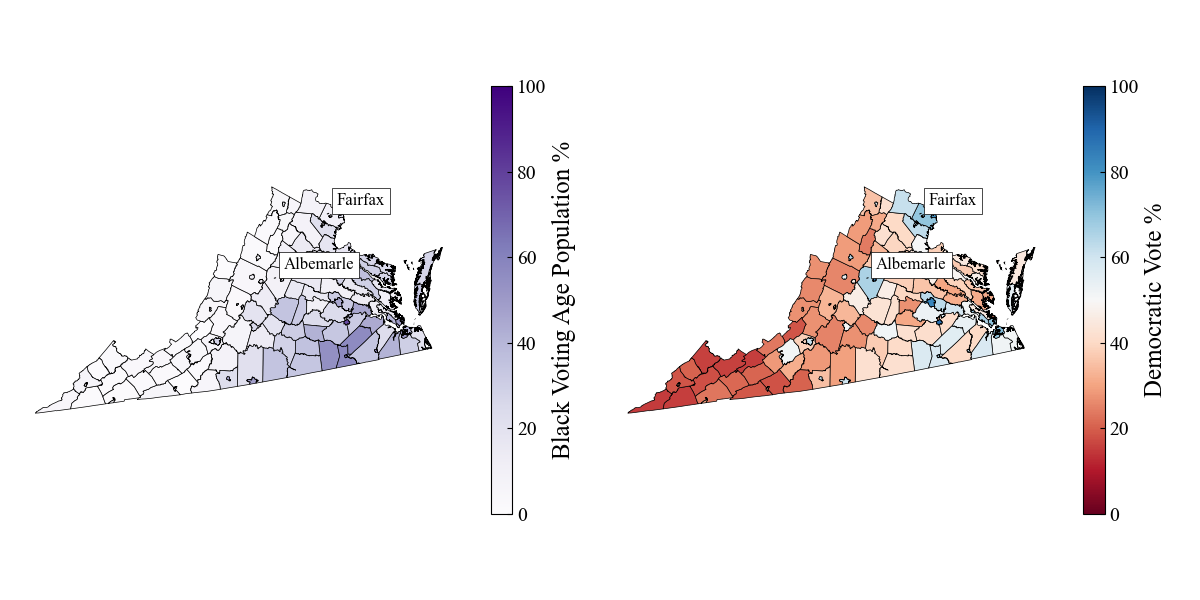}
    \caption{County-level map of Virginia. Counties are shaded according to their \% black voting age population (left) and \% democrat vote (right). Counties with high Democratic vote share outside of black voting age population hubs are annotated.}
    \label{fig:election-VA_2020}
\end{figure}

\section{Application to deep learning}\label{sec:app-nn}

\subsection{Motivation}

We now present an experiment applying vineyard distance to the study of neural network training dynamics. A fundamental task in deep learning is to fit a neural network to training data to build a classifier. Through gradient descent, internal parameters in the neural network are adjusted to reduce the training loss until a convergence criterion is satisfied. Despite their enormous practical success, the internal dynamics of neural network training remain difficult to interpret and compare across different architectures.

Each gradient descent step is called an epoch. If we consider the space of all possible inputs $X$, and focus on binary classification, then at each epoch the neural network gives a \textbf{output function} $f: X \to [0,1]$. A perfect classification would send all points in Class 0 to $0$ and all points in Class $1$ to $1$. In practice, however, the image of $f$ contains more than just $\{0,1\}$; we can think of $f(x)$ as a prediction representing the confidence that the point $x$ is in Class 1 and not Class 0. We will use vineyard distance to study the evolution of the output function throughout the training process.

\subsection{Experiments}

\begin{figure}[h]
    \centering
    \subfloat[Histogram of vineyard distances from training trajectories pooled across all setups and all trials.]{
    \centering
        \includegraphics[height=4cm]{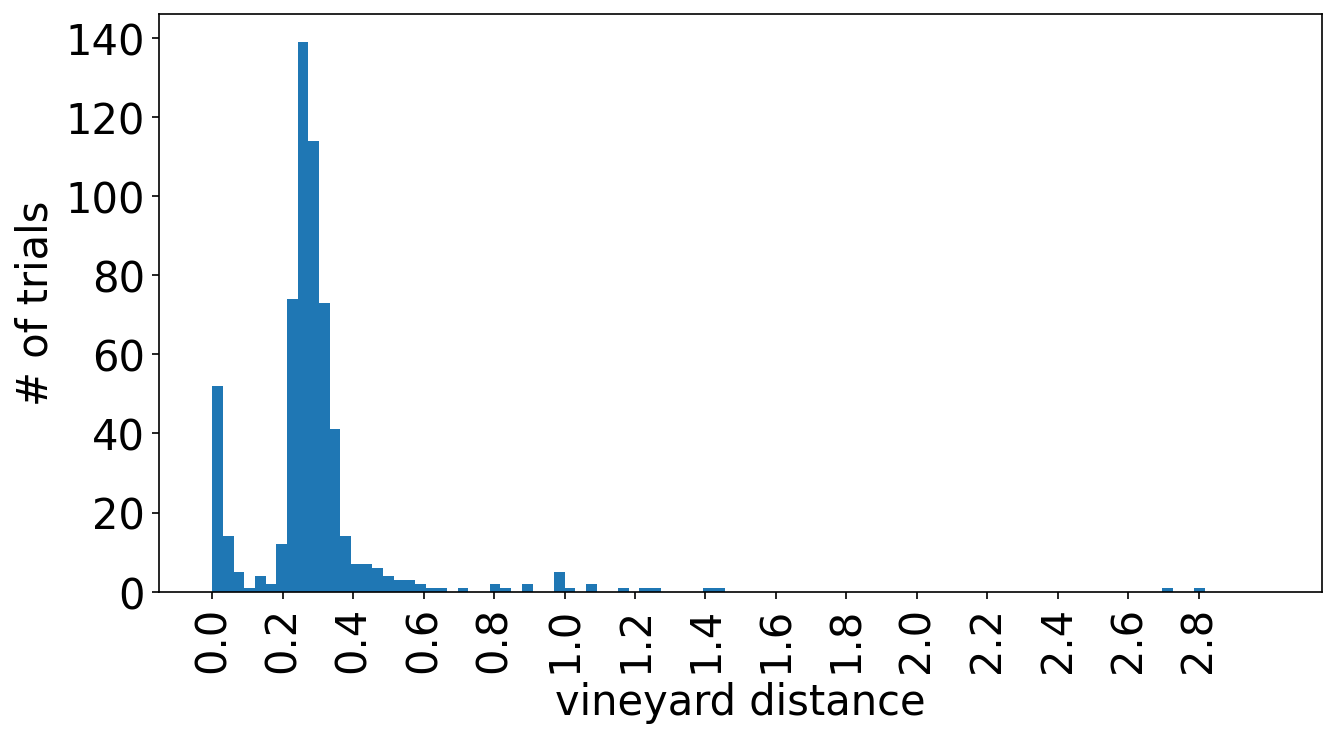}\label{fig:alltogetherhistogram}
    }  
    \subfloat[Vineyard distances for successful training trajectories, broken down by the hyperparameter value that was varied.]{
    \centering
        \includegraphics[height=4cm]{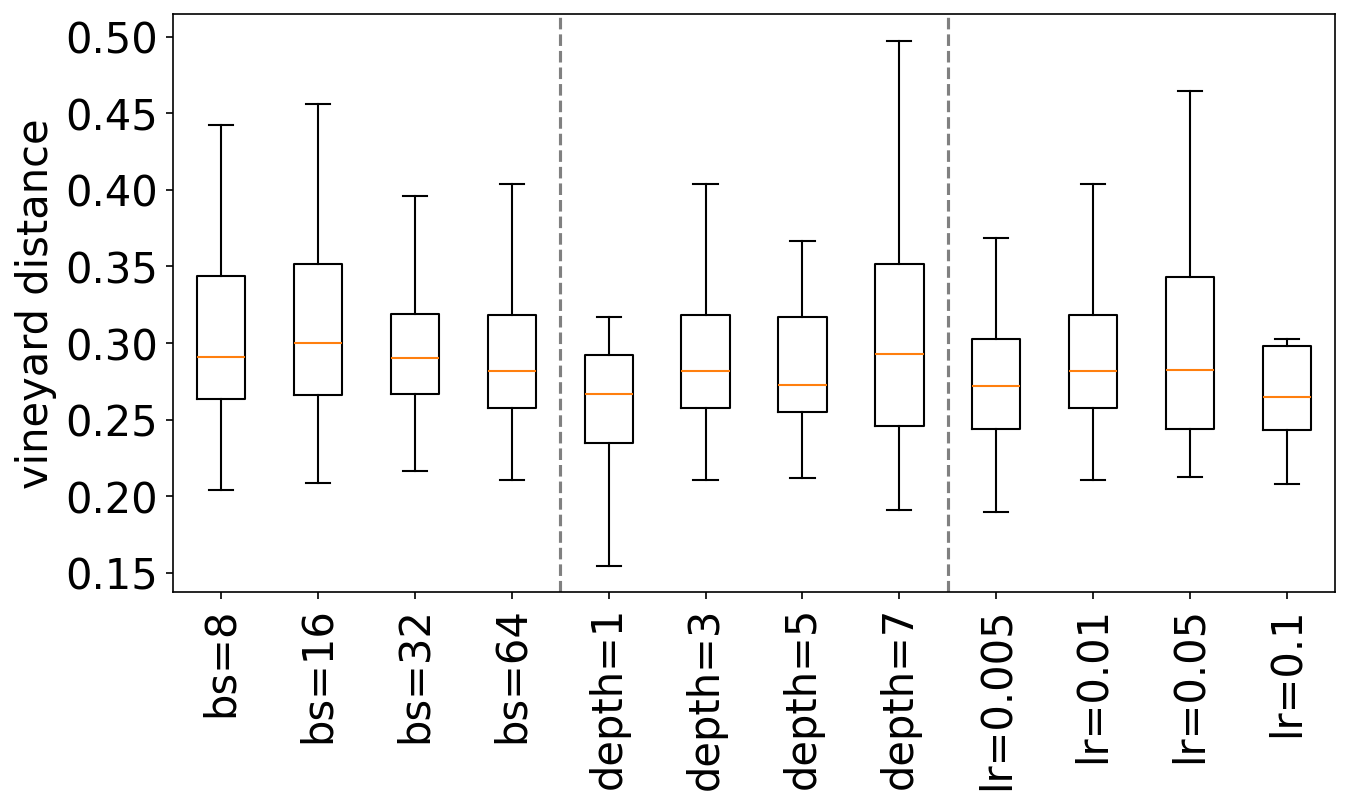}\label{fig:alltogetherboxplot}
    }
    \caption{Summary of vineyard distances given by the training trajectory of various neural network setups.}
    \label{fig:alltogether}
\end{figure}

For our experiments, we use the toy dataset in two dimensions shown in Figure~\ref{fig:exampletraj}. In practical applications, the dimension and size of input data is typically much larger, but we will use our dataset as a proof of concept that has the advantage of being easy to visualize and which has an easily identifiable topology for the decision boundary (a circle). At each epoch, we represent the output function as an image (see Figure~\ref{fig:exampletraj}) and compute its $H_1$ persistence. To obtain a standard weighting vineyard distance, we sum the weighted Wasserstein distance between successive persistence diagrams (i.e.~a finite $n$ version of Theorem~\ref{thm:distancebyW}). Note that this is different to the straight-line homotopy case since the (discrete) vineyard is defined by the training process not an interpolation.

While most analyses of neural network performance focus on the speed with which the loss decreases and the final accuracy, we are interested in qualitative features of the training process itself. To cover a range of possible scenarios, we vary three hyperparameters: the batch size (\textbf{bs}), the number of hidden layers (\textbf{depth}), and the learning rate (\textbf{lr}). Our base architecture is $\text{bs} = 64$, $\text{depth} = 3$ and $\text{lr} = 0.01$, and we vary only one of these hyperparameters at a time. For each setup, we run 100 trials, choosing a random 80\% of the points in the dataset each time,  and record the vineyard distance.

\begin{figure}[h]
    \centering
    \subfloat[Dataset]{
    \centering
        \includegraphics[width=0.12\textwidth]{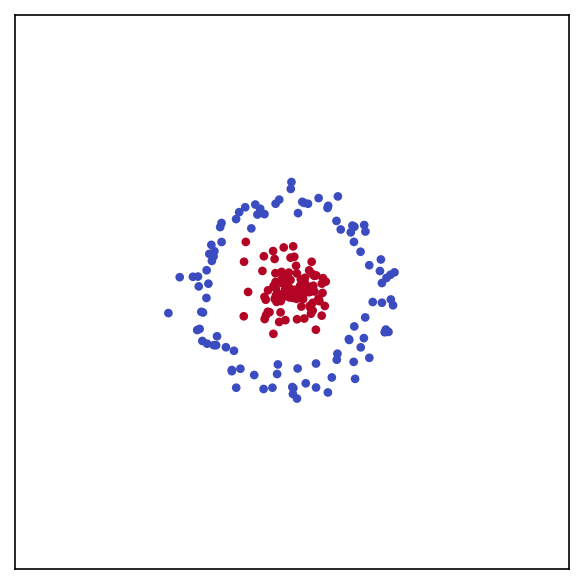}
    }
    
    \subfloat[Sample training trajectories of increasing vineyard distance.]{
    \centering
    \begin{tikzpicture}[scale=0.95]
        \node at (3,5) {\includegraphics[width=9.5cm]{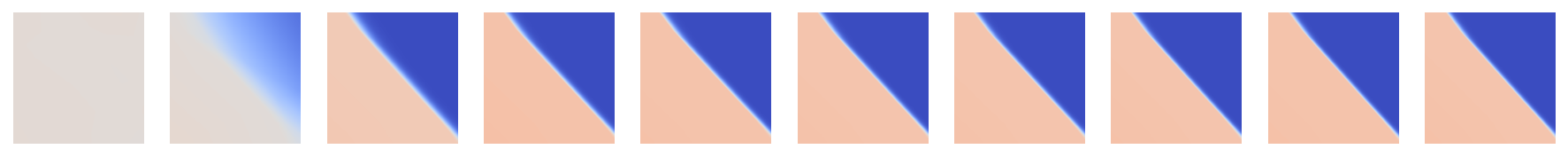}};
        \node at (3,3) {\includegraphics[width=9.5cm]{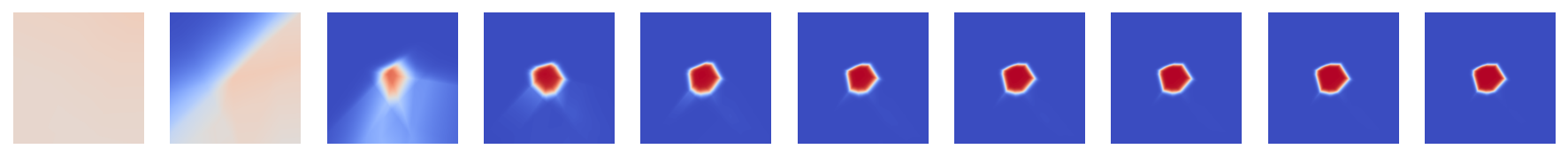}};
        \node at (3,1) {\includegraphics[width=9.5cm]{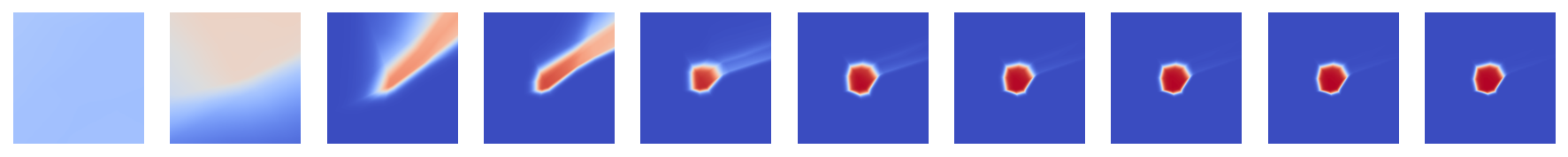}};
        \node at (3,-1) {\includegraphics[width=9.5cm]{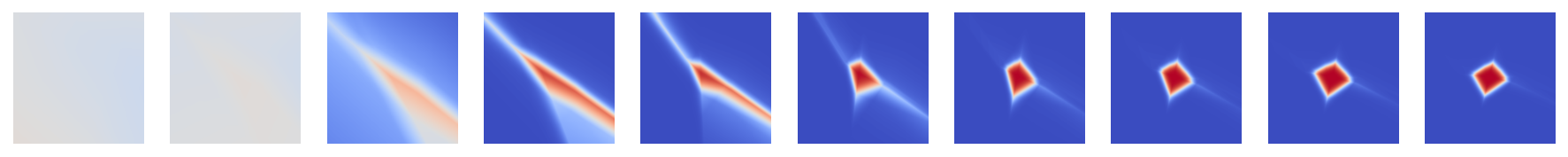}};
        \node at (3,-3) {\includegraphics[width=9.5cm]{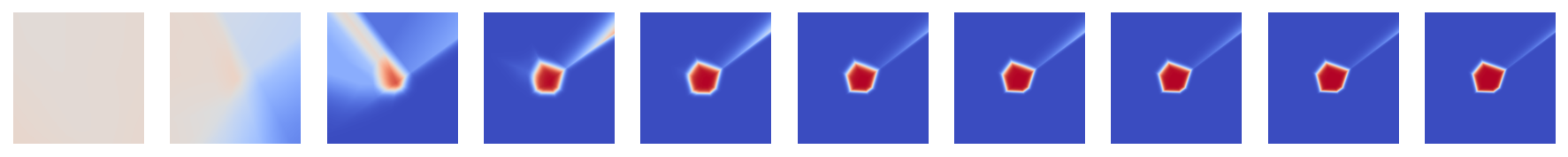}};
        \node at (3,-5) {\includegraphics[width=9.5cm]{nn/example_traj4.png}};
        \node at (3.5,-6) {\small training};
        \draw[->] (-1.5,-5.7)--(7.5,-5.7);
        \node at (9.2,5) {\includegraphics[width=1.8cm]{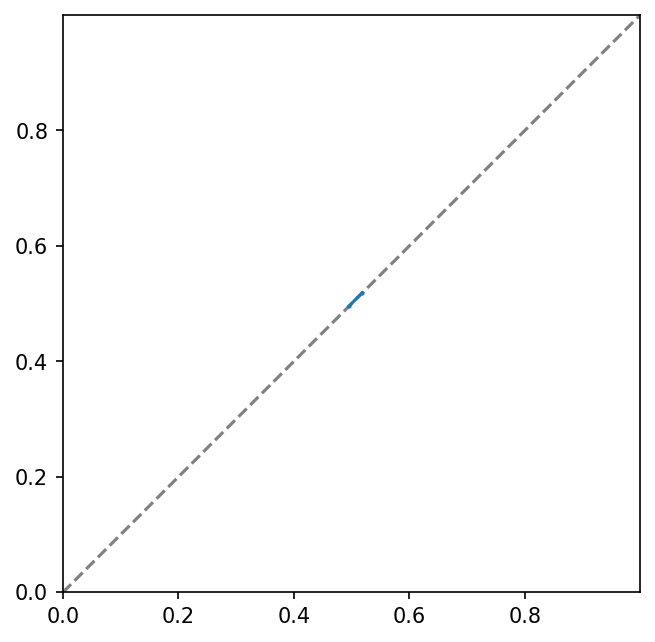}};
        \node at (9.2,3) {\includegraphics[width=1.8cm]{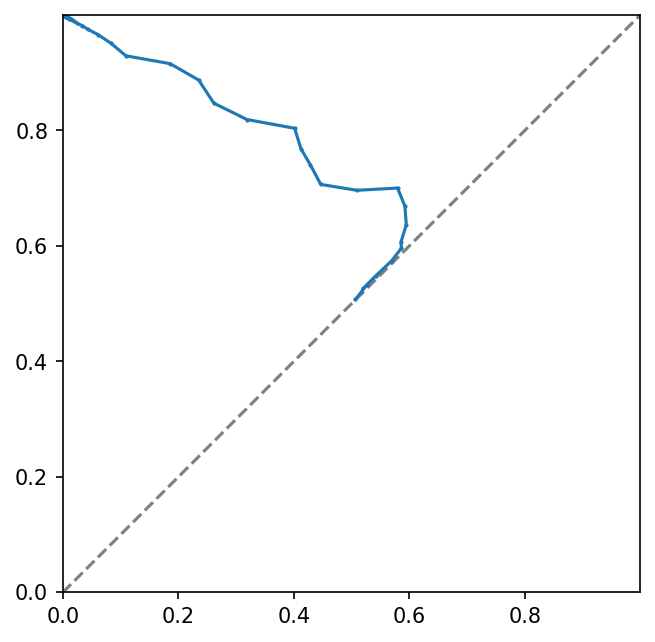}};
        \node at (9.2,1) {\includegraphics[width=1.8cm]{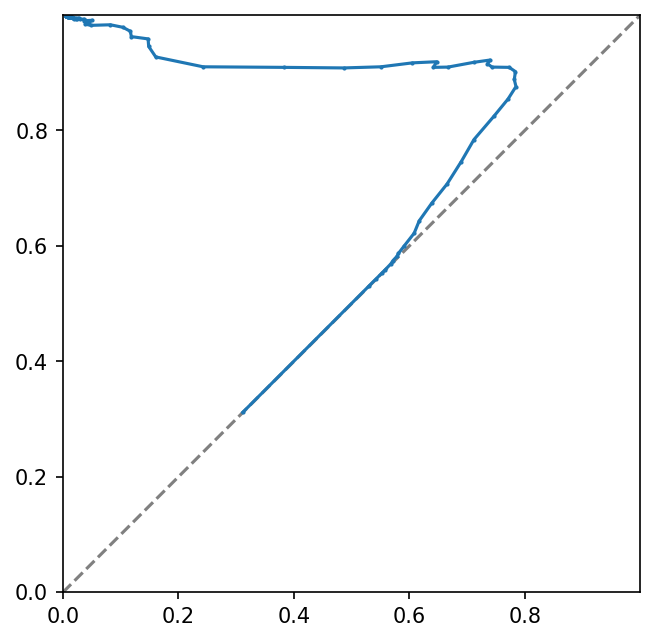}};
        \node at (9.2,-1) {\includegraphics[width=1.8cm]{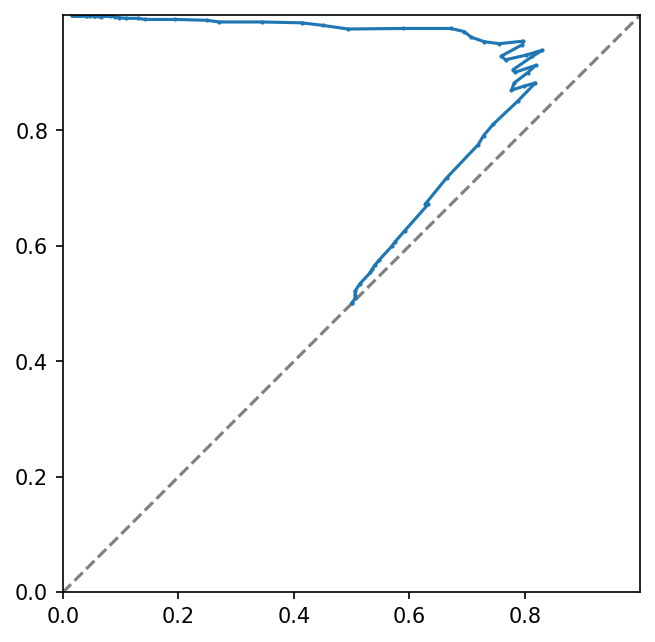}};
        \node at (9.2,-3) {\includegraphics[width=1.8cm]{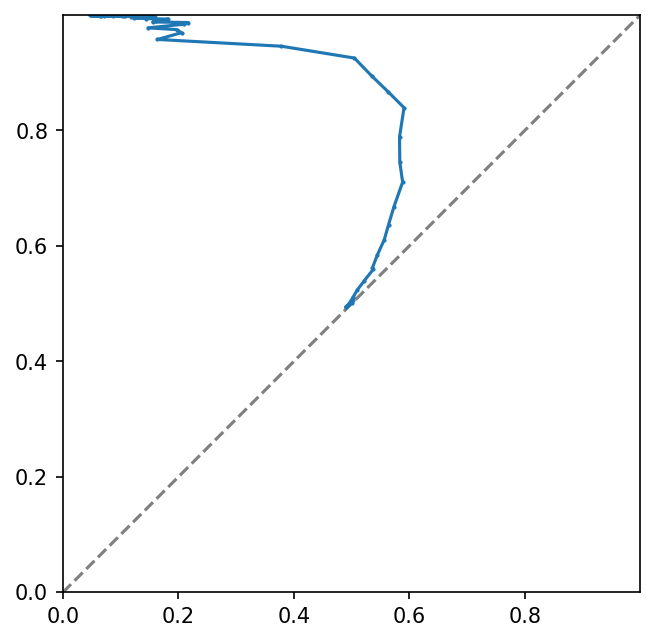}};
        \node at (9.2,-5) {\includegraphics[width=1.8cm]{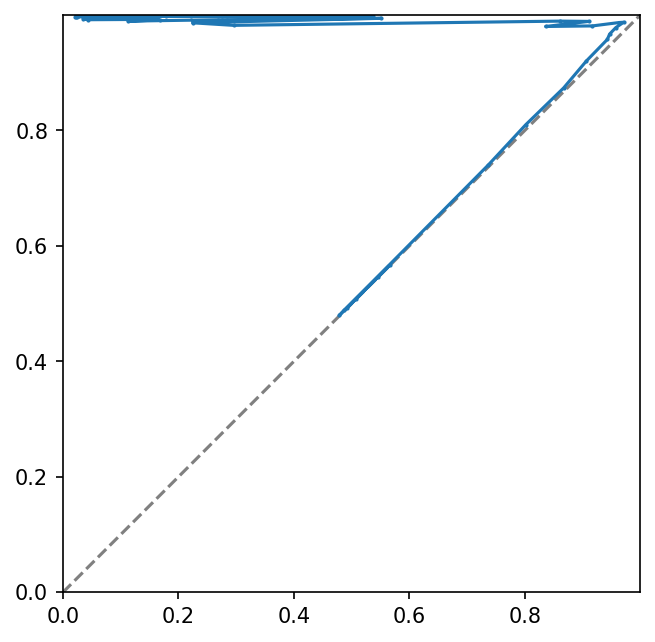}};
        \node at (11,5) {$\mathbb{V} = 0.000$};
        \node at (11,3) {$\mathbb{V} = 0.250$};
        \node at (11,1) {$\mathbb{V} = 0.351$};
        \node at (11,-1) {$\mathbb{V} = 0.439$};
        \node at (11,-3) {$\mathbb{V} = 0.970$};
        \node at (11,-5) {$\mathbb{V} = 2.716$};
    \end{tikzpicture}
    }
    \caption{Some example training trajectories for the our toy dataset. We show ten snapshots of the output function during training for each trial. On the right we plot the vineyard and give the vineyard distance. The vineyard distance distinguishes not only between successful and unsuccessful training, but also between different evolutions in the shape of the output function.}
    \label{fig:exampletraj}
\end{figure}

A histogram of all vineyard distances is shown in Figure~\ref{fig:alltogether}. We see a clear division of values into three main categories: values near $0$, a range of values between $0.2$ and $0.4$ and some outlier values above $0.6$. In Figure~\ref{fig:exampletraj}, we display more information about a sample learning trajectory from these bands. We see that values near $0$ indicate that the neural network was unable to find the correct decision boundary at all. Values between $0.2$ and $0.4$ indicate successful training. Lower values in this band indicate that the shape of the boundary was fixed early and then sharpened, while higher values indicate that the boundary was sharpened before the red region was fully enclosed. These two behaviors produce distinct vines in $H_1$, as Figure~\ref{fig:exampletraj} shows. Finally, values higher than $0.5$ correspond to oscillations in the topology of the decision boundary, sometimes at the very end of the training cycle, where training accuracy has already reached 100\%.

In Figure~\ref{fig:alltogetherboxplot} we take only those trials that ended with accuracy above 95\% and show a boxplot breaking down the vineyard distances by setup, with outliers removed. The result is somewhat surprising, namely that none of the hyperparameters has a large overall effect on the vineyard distance of a typical successful training trajectory. In other words, the randomness that dominates is instead from the subsampling of the data and from the random initialization of parameters. We note that the independence of vineyard distance from the parameterization of the vines is what makes this an apples-to-apples comparison. Further investigation is needed to determine exactly what causes the difference between training trajectories in persistence space.

\section{Conclusion}

Because vineyards are complex objects, they have historically been used for exploratory analysis wherein a vineyard is computed, plotted, and then analyzed carefully to identify trends. In order to study large ensembles of vineyards, such as for $100$ cities as in Section~\ref{sec:app-geospatial} or many different training trajectories as in Section~\ref{sec:app-nn}, vineyards need to be characterized by scalar properties. This is somewhat analogous to the use of persistence statistics to summarize ensembles of persistence diagrams (see e.g.~\cite{chung2021persistent}). This paper proposes vineyard distance -- the weighted length of the image of the vineyard in the plane -- as a natural choice for summarizing a vineyard with a scalar. Our other main proposal is that in the absence of an existing vineyard interpolating between two functions, a straight-line homotopy can be used. This way of measuring distance between functions has distinct advantages since it resembles a hybrid distance: lying somewhere between $L^p$ and persistence distances.

The main drawback of using vineyard distance is that it is even more computationally expensive than computing persistence diagrams. This is somewhat mitigated by the fact that vineyards can be computed in linear time rather than recomputing the persistence diagram at each infinitesimal time step for smooth homotopies, as noted in the original vineyard paper~\cite{cohen2006vines}. In the straight-line homotopy case, the critical points (where the filtration order of simplices changes) are predictable in advance, which suggests an efficient algorithm is possible. We leave this to future work, along with other topological distance measures inspired by vineyards.

\FloatBarrier

\newpage

\section*{Data availability statement}

Datasets for the geospatial data applications are in the public domain: the CDC city boundaries~\cite{CDC500CitiesBoundaries}, Census 2020 demographics obtained from NHGIS~\cite{IPUMS2021}, and election data~\cite{HarvardElections2018}. Python code for reproducing the experiments in this paper is available at \url{https://github.com/arulandu/vineyard/}.

\section*{Acknowledgments} We would like to thank Sat Gupta, Jianping Sun, Leslie Justice, and Denise Greenwell for their efforts in organizing the Research Experience for Undergraduates where this research took place.

\section*{Funding}
This work is supported in part by funds from the National Science Foundation grant DMS-2244160.

\appendix 



\section{Proof of Proposition~\ref{prop:geo_std_len}} \label{sec:apdx-geo-sl}

We begin with some lemmas. Let $w$ be the standard weighting $w(x,y) = \frac{1}{\sqrt{2}}(y-x)$ throughout this section.

\begin{lemma}
Let $p_0 = (x_0, y_0)$ and $p_1 = (x_1, y_1)$, and let $\gamma: [0,1] \to \mathbb{R}^2$ be the line segment
$$
\gamma(t) = (1-t)p_0 + tp_1
$$
Then,
$$
\int_{\gamma} w ds = d_\infty(p_0, p_1)\frac{w(p_0) + w(p_1)}{2}
$$
\end{lemma}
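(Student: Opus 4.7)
The plan is to exploit two features of the setup: (i) the weighting $w(x,y) = \tfrac{1}{\sqrt 2}(y-x)$ is an affine function of the coordinates, so its restriction to any straight line is affine in the line parameter; and (ii) because we are using the $L^\infty$ metric on $\mathbb{R}^2$, the arc length element along a straight segment is particularly simple.

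First I would unpack the arc length element. For a smooth curve $\gamma:[0,1]\to\mathbb{R}^2$, the $L^\infty$ arc length is $\int_0^1 \|\gamma'(t)\|_\infty\, dt$, so on the linear parametrization $\gamma(t) = (1-t)p_0 + tp_1$ we have $\gamma'(t) = p_1 - p_0$ constant, giving $ds = \|p_1-p_0\|_\infty\, dt = d_\infty(p_0,p_1)\, dt$. The total $L^\infty$ length of $\gamma$ is therefore exactly $d_\infty(p_0,p_1)$, as one would expect.

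Next I would plug $\gamma(t)$ into $w$. Since $w$ is affine,
\[
w(\gamma(t)) = w\bigl((1-t)p_0 + tp_1\bigr) = (1-t)\,w(p_0) + t\,w(p_1).
\]
Combining this with the arc length element gives
\[
\int_\gamma w\, ds = d_\infty(p_0,p_1) \int_0^1 \bigl[(1-t)\,w(p_0) + t\,w(p_1)\bigr]\, dt
= d_\infty(p_0,p_1)\cdot \frac{w(p_0)+w(p_1)}{2},
\]
which is the claimed identity.

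There is essentially no obstacle here: the affineness of $w$ reduces the integrand to a linear polynomial in $t$, and the constant speed of the straight-line parametrization turns the line integral into an elementary integral. The only point that deserves a sentence of justification is the $L^\infty$ arc length formula, since the paper's convention uses the sup-norm rather than the Euclidean norm for line integrals; once that is noted, the computation is a one-liner.
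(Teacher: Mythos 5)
Your proof is correct and follows essentially the same route as the paper's: the paper's one-line argument ("the integral is the average of $w$ over the length of $\gamma$ times the length of $\gamma$, since $w$ is linear") is exactly your computation, just stated without unpacking the constant-speed $L^\infty$ parametrization and the integral of the affine function $(1-t)w(p_0)+t\,w(p_1)$. Your version merely makes those two steps explicit, which is a reasonable expansion rather than a different argument.
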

\begin{proof}
    The integral in question is the average of $w$ over the length of $\gamma$ times the length of $\gamma$, which is equal to the right hand side since $w$ is linear.
\end{proof}

\begin{lemma}\label{lemma:breakup}
    Let $p_0 = (x_0, y_0)$ and $p_1 = (x_1, y_1)$. Suppose without loss of generality that $w(p_0) \leq w(p_1)$. Suppose further that $x_0 + y_0 \leq x_1 + y_1$. Then
    $$
    d_\infty(p_0, p_1) \frac{w(p_0) + w(p_1)}{2} \geq d_\infty(p_0, \tilde{p}) \frac{w(p_0) + w(\tilde{p})}{2} + d_\infty(\tilde{p}, p_1) \frac{w(\tilde{p}) + w(p_1)}{2}
    $$
    where $\tilde{p} = (\frac{x_0 + x_1}{2} + \frac{y_1 - y_0}{2}, \frac{x_1 - x_0}{2} + \frac{y_0 + y_1}{2})$.
\end{lemma}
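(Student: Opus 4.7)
The plan is a direct calculation, guided by the geometric picture: $\tilde{p}$ is the natural ``corner'' of an $L^\infty$ path from $p_0$ to $p_1$ that first moves parallel to the diagonal (a contour line of $w$, on which $w$ is constant) and then perpendicular to it in the direction $(-1,1)$. Once this is recognized, the computation reduces to evaluating both sides of the inequality and comparing.

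First I would introduce the shorthand $a = x_1 - x_0$ and $b = y_1 - y_0$. The two hypotheses translate to $b - a \geq 0$ (from $w(p_0) \leq w(p_1)$, i.e.\ $y_1 - x_1 \geq y_0 - x_0$) and $a + b \geq 0$, whence $b \geq |a|$ and therefore $d_\infty(p_0, p_1) = b$. Direct substitution into the given formula for $\tilde{p}$ yields the key displacement identities $\tilde{p} - p_0 = \tfrac{a+b}{2}(1,1)$ and $p_1 - \tilde{p} = \tfrac{b-a}{2}(-1,1)$. From the first, $w(\tilde{p}) = w(p_0)$ (motion in the $(1,1)$-direction preserves $y - x$) and $d_\infty(p_0, \tilde{p}) = \tfrac{a+b}{2}$; from the second, $d_\infty(\tilde{p}, p_1) = \tfrac{b-a}{2}$.

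Finally, setting $\alpha = w(p_0)$ and $\beta = w(p_1)$ with $\beta - \alpha = \tfrac{1}{\sqrt{2}}(b - a)$, the left-hand side becomes $b \cdot \tfrac{\alpha + \beta}{2}$ and the right-hand side becomes $\tfrac{a+b}{2}\alpha + \tfrac{b-a}{2} \cdot \tfrac{\alpha + \beta}{2}$. A short expansion (using $\alpha + \beta = 2\alpha + \tfrac{b-a}{\sqrt{2}}$) collapses their difference to $\tfrac{(b-a)(b+a)}{4\sqrt{2}}$, which is nonnegative by the two hypotheses.

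There is no real conceptual obstacle beyond this bookkeeping. The only delicate point is the verification that both $d_\infty(p_0, \tilde{p})$ and $d_\infty(\tilde{p}, p_1)$ reduce to the simple expressions above; this works precisely because the two coordinates of each displacement vector have equal absolute value, which is the defining feature of the specific $\tilde{p}$ chosen in the statement. Given that, the proof becomes the algebraic identity $2b(b-a) - (b-a)^2 = (b-a)(b+a)$.
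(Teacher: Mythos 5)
Your proposal is correct and follows essentially the same route as the paper's proof: compute the three $d_\infty$ distances explicitly using the fact that $\tilde p - p_0$ and $p_1 - \tilde p$ are parallel and perpendicular to the diagonal, substitute, and reduce the difference of the two sides to $\frac{(b-a)(b+a)}{4\sqrt 2}=\frac{(y_1-y_0)^2-(x_1-x_0)^2}{4\sqrt 2}\geq 0$. The only cosmetic difference is your explicit observation that $w(\tilde p)=w(p_0)$, which the paper leaves implicit in its substitution.
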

\begin{proof}
Notice that all the $d_\infty$ terms have closed forms. Indeed, since $p_1 - \tilde{p}$ and $\tilde{p}-p_0$ are parallel to the lines $y=-x$ and $y=x$ respectively, we have
    \begin{align*}
        d_\infty(\tilde{p}, p_1) & = \frac{x_0-x_1}{2} + \frac{y_1 - y_0}{2} \\
        d_\infty(p_0, \tilde{p}) & = \frac{x_1-x_0}{2} + \frac{y_1 - y_0}{2}
    \end{align*}
From the assumptions on $p_0$ and $p_1$, we have that $y_1 - y_0 \geq x_1 - x_0,\ y_1 - y_0 \geq x_0 - x_1$, and $y_1 \geq y_0$, so we have $d_\infty(p_0, p_1) = y_1 - y_0$. Once these have been substituted into the equation above, we have that the difference between the left and the right hand side is 
$$
\frac{(y_0 - y_1)^2 - (x_0 - x_1)^2}{4 \sqrt{2}} \geq 0
$$
\end{proof}

The first assumption in Lemma~\ref{lemma:breakup}, that $w(p_0) \leq w(p_1)$, can be made without loss of generality. It may seem that the second, $x_0+y_0 \leq x_1+y_1$, cannot. However, reflecting $p_0$ across the perpendicular line from $p_1$ to the diagonal, we can always reduce to the case where this second assumption also holds. Thus, Lemma~\ref{lemma:breakup} applies generally to points in $\mathbb{R}^2_{x \leq y}$. 

What Lemma~\ref{lemma:breakup} shows is that any path which is a straight line segment can be decomposed into a parallel and perpendicular (to the diagonal) part, and that the concatenation of these two parts has lower line integral than the original path; see Figure~\ref{fig:decompose}.

\begin{proof}[Proof of Proposition~\ref{prop:geo_std_len}]
Our goal is to show that every path from $p_0$ to $p_1$ in $\mathbb{R}^2_{x\leq y}$ can be modified into one of two canonical forms without increasing the line integral. We first consider a path $\gamma$ from $p_0$ to $p_1$ which intersects the diagonal. Since $w$ is zero on the diagonal, we can modify $\gamma$ to consist of perpendicular line to the diagonal from $p_0$, a line segment along the diagonal, and a perpendicular line from the diagonal to $p_1$. A quick calculation yields
\begin{equation}\label{eq:diagonalcase}
  \int_\gamma w ds = \frac{1}{2}d_\infty(p_0, \Delta)w(p_0) + \frac{1}{2}d_\infty(p_1, \Delta)w(p_1) = \frac{w(p_0)^2 + w(p_1)^2}{2\sqrt{2}}  
\end{equation}
for this case.

Let's assume now that we have a piecewise linear path from $p_0$ to $p_1$ that does not intersect the diagonal. By decomposing every piece as in Figure~\ref{fig:decompose}, we may assume that all the pieces are either parallel to or perpendicular to the diagonal. In particular, each segment is in one of four possible directions, which for convenience we refer to be northwest (NW), northeast (NE), southwest (SW), and southeast (SE). We will start rearranging the order of these segments while never increasing the line integral. Moving SE segments earlier in the order can not increase the line integral since every subsequent segment shifts closer to the diagonal. Similarly, moving NW segments later in the order can not increase the line integral. Thus, we may assume that our path consists of a number of SE segments, followed by NE and SW segments, and finally NW segments. Since backtracking can only increase the integral, we may replace the NE and SW segments by a single line segment in one of those two directions. Thus our path can be modified into the form of Figure~\ref{fig:classicform}. Note that if during these modifications, the path intersects the diagonal, then we are in the case we dealt with earlier in the proof.

Let us assume we are in the left hand case in Figure~\ref{fig:classicform} in which $x_1+y_1 \geq x_0 + y_0$, and let $q$ be the distance between the start and end points of the SE segments. Using the distances derived in the proof of Lemma~\ref{lemma:breakup}, we can write the line integral as
$$
\frac{w(p_0)+w(p_0)-q}{2}\cdot q + \left(w(p_0) - q\right)\cdot \frac{y_1 - y_0 + x_1 - x_0}{2} + \frac{w(p_0) - q + w(p_1)}{2} \cdot \left(\frac{y_1-y_0-x_1+x_0}{2} + q \right)
$$
which is a convex function of $q$. This means that its minima are at either $q = 0$ or at the value of $q$ for which the path intersects the diagonal, which we have already dealt with. For $q = 0$, we have
\begin{align*}
    \int_\gamma w ds & = w(p_0)\cdot \frac{y_1-y_0+x_1-x_0}{2} + \frac{w(p_0)+w(p_1)}{2} \cdot \frac{y_1 - y_0 - x_1 + x_0}{2} \\
    & = w(p_0)\cdot \frac{y_1-y_0+x_1-x_0}{2} + \frac{w(p_1)^2-w(p_0)^2}{2\sqrt{2}}
\end{align*}
If are instead in the right hand case of Figure~\ref{fig:classicform}, then we have
\begin{align*}
    \int_\gamma w ds & = w(p_0)\cdot \frac{y_0-y_1+x_0-x_1}{2} + \frac{w(p_0)+w(p_1)}{2} \cdot \frac{y_1 - y_0 - x_1 + x_0}{2} \\
    & = w(p_0)\cdot \frac{y_0-y_1+x_0-x_1}{2} + \frac{w(p_1)^2-w(p_0)^2}{2\sqrt{2}}
\end{align*}
Since any rectifiable path from $p_0$ to $p_1$ can be approximated by piecewise linear paths, these are the required lower bounds.
\end{proof}

\begin{figure}[h]
\centering
\subfloat[Two examples of a straight path $\gamma$ into a parallel and a perpendicular segment. Lemma~\ref{lemma:breakup} shows that this cannot increase the line integral.]{
    \centering
    \begin{tikzpicture}[>=stealth, thick, scale=0.35]
    \begin{scope}
        \node[above left] at (1,2) {$p_0$};
        \filldraw (1,2) circle[radius=1.5pt];
        \node[above left] at (4,9) {$p_1$};
        \filldraw (4,9) circle[radius=1.5pt];
        \node[right] at (6, 7) {$\tilde{p}$};
        \filldraw (6,7) circle[radius=1.5pt];
        \draw[->] (1,2)--(4,9);
        \node at (2.4, 6) {$\gamma$};
        \draw[dotted, ->] (1,2)--(6, 7);
        \draw[dotted, ->] (6, 7)--(4,9);
    \end{scope}%
    \begin{scope}[xshift=10cm]
        \node[above left] at (0,8) {$p_0$};
        \filldraw (0,8) circle[radius=1.5pt];
        \node[below left] at (2,4) {$p_1$};
        \filldraw (2,4) circle[radius=1.5pt];
        \node[right] at (3, 5) {$\tilde{p}$};
        \filldraw (3,5) circle[radius=1.5pt];
        \draw[->] (0,8)--(2,4);
        \node at (0, 6) {$\gamma$};
        \draw[dotted, ->] (0,8)--(3,5);
        \draw[dotted, ->] (3,5)--(2,4);
    \end{scope}
    \end{tikzpicture}
    \label{fig:decompose}
    }\hspace{1cm}
\subfloat[We can reduce paths to one of these two forms without increasing their $w$-length.]{
    \centering
        \begin{tikzpicture}[>=stealth, thick, scale=0.3]
    \begin{scope}
        \node[above left] at (1,2) {$p_0$};
        \filldraw (1,2) circle[radius=1.5pt];
        \draw[->] (1,2)--(2,1);
        \node[above left] at (4,9) {$p_1$};
        \filldraw (4,9) circle[radius=1.5pt];
        \draw[->] (2,1)--(7, 6);
        \draw[->] (7, 6)--(4,9);
        \draw[dotted, |<->|] (0.8, 1.8)--(1.8,0.8);
        \node at (0.45, 0.45) {$q$};
    \end{scope}%
    \begin{scope}[xshift=20cm, yshift=10cm]
        \node[above left] at (-2,-1) {$p_0$};
        \filldraw (-2,-1) circle[radius=1.5pt];
        \draw[->] (-2,-1)--(-1,-2);
        \node[above left] at (-9,-4) {$p_1$};
        \filldraw (-9,-4) circle[radius=1.5pt];
        \draw[->] (-1,-2)--(-6, -7);
        \draw[->] (-6, -7)--(-9,-4);
    \end{scope}
    \end{tikzpicture}
    \label{fig:classicform}
}
    \caption{Visual representation of some parts of the proof for Proposition~\ref{prop:geo_std_len}.}
\end{figure}

\bibliographystyle{plain}
\bibliography{refs}

\end{document}